\def\dis{\displaystyle}
\newcommand{\lf}{\mu_f}
\theoremstyle{plain}
\newtheorem {theorem} {Theorem}[section]
\newtheorem {proposition}[theorem]{Proposition}
\newtheorem {corollary} [theorem]{Corollary}
\newtheorem {lemma}[theorem]{Lemma}
\newtheorem{maintheorem}{Theorem}
\theoremstyle{definition}
\newtheorem {remark}[theorem]{Remark}
\newcommand{\Q}{\mathcal{Q}}
\newcommand{\A}{\mathcal{A}}
\newcommand{\Leb}{\operatorname{Leb}}
\newcommand{\HD}{\operatorname{HD}}
\newcommand{\BD}{\operatorname{c}}
\newcommand{\diam}{\operatorname{diam}}
\newcommand{\Jac}{\operatorname{Jac}}
\newcommand{\length}{\operatorname{length}}
\newcommand{\CC}{{\mathcal C}}
\newcommand{\OO}{{\mathcal O}}
\newcommand{\SSS}{{\mathcal S}}
\begin{document}

\title[Continuity of Hausdorff Dimension at Hopf Bifurcation]
{Continuity of Hausdorff Dimension at Hopf Bifurcation}

\author[V. Horita and O. Raizzaro]
{Vanderlei Horita$^1$ and Oyran Raizzaro$^2$}

\address{$^{1}$ S\~ao Paulo State University (UNESP), Institute of Biosciences, Humanities and Exact Sciences (Ibilce), Campus S\~ao Jos\'e do Rio Preto.
Rua C. Colombo, 2265, CEP 15054--000 S. J. Rio Preto, S\~ao Paulo,
Brazil.}

\address{$^2$ State University of Mato Grosso do Sul (UEMS), Departament of Mathematics,
Rua W. Hubacher, 138, CEP 79750-035 Nova Andradina, Mato Gros\-so do Sul, Brazil.}


\begin{abstract}
We investigate the continuity of the Hausdorff dimension and the box dimension (limit
capacity) of non-hyperbolic repellers of diffeomorphisms derived from
transitive Anosov diffeomorphisms through a Hopf bifurcation studied by
Horita-Viana in \cite{HV05}.
Here, we extend their work showing that both dimensions are continuous at the
bifurcation parameter.
In the proof, we consider \emph{maps with holes} introduced
by Horita-Viana in \cite{HV01} and further developed by Dysman in \cite{Dys05}, relating the
Hausdorff dimension with the volume of the hole.
\end{abstract}

\maketitle

\let\thefootnote\relax\footnote{2000 {\it Mathematics Subject Classification}.
Primary 37J10, 37D30, 37B40, 37C20.}
\let\thefootnote\relax\footnote{{\it Key words and phrases}.
Dimension theory, Hopf bifurcation, Maps with holes, Non-uniform hyperbolicity, Repellers.}
\let\thefootnote\relax\footnote{Work partially supported by Capes, CNPq,
FAPESP, and PRONEX.}

\section{Introduction}

Hopf bifurcations arise in a wide range of physical phenomena, including fluid dynamics, chemical reactions, population dynamics, electrical circuits, and cellular biology. This subject has been extensively studied; see, for instance, \cite{ AY78,JS72, MM76,  Sm72, So73b}.

The continuity of Hausdorff dimension remains a delicate and largely unresolved problem in smooth dynamics. Several works \cite{DV89,Ma85, McMa83, PV88, Ra92} investigate its behavior for invariant sets associated with one-parameter families of diffeomorphisms near bifurcation parameters. In particular, \cite{DV89} constructs examples of families undergoing non-degenera-te saddle-node bifurcations for which the Hausdorff dimension is continuous in some cases and discontinuous in others. To the best of our knowledge, a general criterion ensuring continuity is still unknown. Estimating the Hausdorff dimension of a set is highly non-trivial, especially when determining lower bounds, and even upper bounds can be difficult to obtain; see, for example, \cite{BDV95, Dys05, HV01, HV05, SS99, Ur86}.

In this work, we prove that the Hausdorff dimension of the attracting sets associated with the families of DA-diffeomorphisms introduced in \cite{HV05} converges to the dimension of the ambient manifold. We begin by recalling their construction.

Let $\mathbb{T}^3$ denote the three-dimensional torus, and consider a linear Anosov diffeomorphism $G$ on $\mathbb{T}^3$ with one real eigenvalue $\lambda \in (0,1)$ and a pair of complex conjugate eigenvalues $\sigma e^{\pm i\alpha}$, with $\sigma > 3$ (this condition is technical and not optimal in \cite{HV05}). Assume that $k\alpha \notin 2\pi\mathbb{Z}$ for $k=1,2,3,4$. In cylindrical coordinates $(\rho,\theta,z)$ near the fixed point $(0,0,0)$, the map takes the form
\[
G(\rho,\theta,z) = (\sigma \rho, \theta + \alpha, \lambda z).
\]

From $G$, we construct a family of diffeomorphisms $\hat{g}_\mu$, $\mu \in [-1,1]$, undergoing a Hopf bifurcation at $\mu=0$. To this end, we introduce a $C^\infty$ function $\Phi(\mu,w,z)$ defined on $[-1,1]^3$ (see Figure~\ref{f.phi}), satisfying conditions $(C_1)$–$(C_4)$ below for some constant $C_0>0$ and sufficiently small $\delta_0>0$:

 \begin{itemize}
 	\item[($C_1$)] $\Phi(\mu,0,0) = 1 - \mu \le\Phi(\mu,w,z)$
 	for all $w\ge 0$.
 	\item[($C_2$)] $\Phi(\mu,w,z) = \sigma$ when
 	either $w \ge \delta_0$ or $|z| \ge \delta_0$\,.
 	\item[($C_3$)] $0 < \partial_w \Phi(\mu,w,z) \le C_0/\delta_0$
 	when $0\le w < \delta_0$ and $|z| < \delta_0$\,.
 	\item[($C_4$)] There exist  $\sigma_1\in(1,\sigma)$ and
 	$\delta_1\in(0,\delta_0)$ such that $\Phi(\mu,w,z)>\sigma_1$
 	for all $w\ge\delta_1$, and
 	$\partial_w \Phi(\mu,w,z)\ge \partial_w \Phi(\mu,0,0)$
 	for all $w\in[0,\delta_1]$.
 \end{itemize}

 	\begin{figure}[phtb]
 	\begin{center}
 		\includegraphics[height=4.5cm]{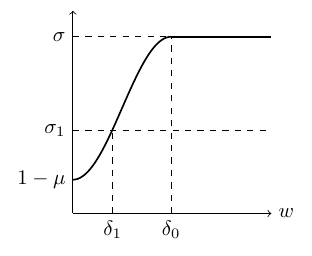}
 		\caption{\label{f.phi}Graph of $\Phi(\mu,\cdot,z)$}
 	\end{center}
 \end{figure}

We choose $\delta_0$ sufficiently small so that the domain
\[
\{(\rho,\theta,z) : \rho^2 \le \delta_0,\ |z| \le \delta_0\}
\]
is contained in a neighborhood $V$ of the origin whose closure lies inside a rectangle of a Markov partition for $G$.

We then deform $G$ inside $V$ to obtain a one-parameter family $\hat{g}_\mu$ that coincides with $G$ outside $V$ and is given in cylindrical coordinates by
\begin{equation}\label{e.local_form}
\hat{g}_\mu(\rho,\theta,z)
= \big(\Phi(\mu,\rho^2,z)\rho,\ \theta+\alpha,\ \lambda z\big).
\end{equation}
The origin remains a fixed point for all $\mu$, and the family $(\hat{g}_\mu)_\mu$ undergoes a generic Hopf bifurcation at $\mu=0$ (see Proposition~\ref{p.existehopf}).

We consider families $(g_\mu)_\mu$, $\mu \in [-1,1]$, in a $C^5$-neighborhood of $(\hat{g}_\mu)_\mu$. Since the unfolding of a generic Hopf bifurcation is an open property in the $C^5$ topology (see \cite{MM76}, \cite[Section~2.2]{HV05}, and Section~\ref{ss.hopf}), any such family admits a unique curve of fixed points $p_\mu$ near the origin and undergoes a Hopf bifurcation at some parameter $\mu_*$ close to $0$. As $\mu$ crosses $\mu_*$, the system transitions from an Anosov diffeomorphism to a DA-type diffeomorphism: the fixed point $p_\mu$, initially a hyperbolic saddle for $\mu < \mu_*$, becomes an attracting point for $\mu > \mu_*$. A representative scenario is shown in Figure~\ref{fig1}.

\begin{figure}[phtb]
	\begin{center}
		\includegraphics[scale=0.4]{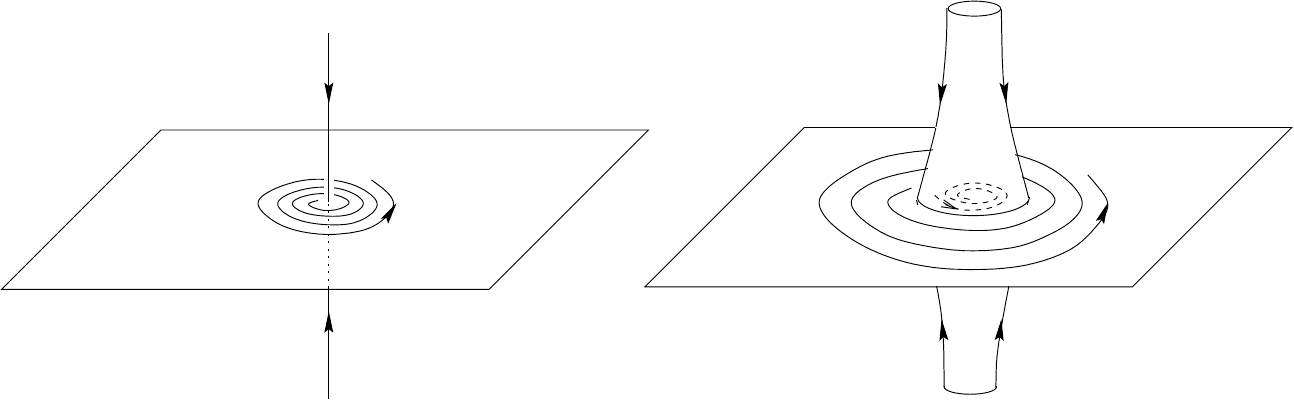}
		\caption{Hopf bifurcation}\label{fig1}
	\end{center}
	\end{figure}

For $\mu > \mu_*$, the non-wandering set consists of the sink $p_\mu$ and a non-hyperbolic repeller
\[
\Lambda_\mu := \mathbb{T}^3 \setminus W^s(p_\mu).
\]
Thus, the complement of the basin of attraction of $p_\mu$ forms a global repeller. A precise description of these families is given in Section~\ref{ss.hopf}. Horita and Viana \cite{HV05} showed that for all $\mu > \mu_*$, both the Hausdorff and box dimensions of $\Lambda_\mu$ are strictly less than $3$, leaving open the question of their continuity. Our main result resolves this issue.

\begin{maintheorem}
\label{t.thmB}
Let $(g_\mu)_\mu$ be a family of diffeomorphisms in a $C^5$-neighbor-hood of $(\hat{g}_\mu)_\mu$. Then
\[
\lim_{\mu \to \mu_*} \HD(\Lambda_\mu) = 3.
\]
In particular, $\displaystyle \lim_{\mu \to \mu_*} \BD(\Lambda_\mu) = 3$.
\end{maintheorem}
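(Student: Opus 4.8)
The plan is to prove the statement in the form $\lim_{\mu\to\mu_*}\HD(\Lambda_\mu)=3$. The upper bound $\HD(\Lambda_\mu)\le 3$ is free, since $\Lambda_\mu\subset\mathbb{T}^3$; and for $\mu\le\mu_*$ the diffeomorphism $g_\mu$ is (conjugate to) a transitive Anosov diffeomorphism, so $W^s(p_\mu)$ is an injectively immersed submanifold, hence of zero Lebesgue measure, so that $\Lambda_\mu=\mathbb{T}^3\setminus W^s(p_\mu)$ has full measure and $\HD(\Lambda_\mu)=3$. The whole content is therefore the one-sided limit: given $\eps>0$, produce $\d>0$ with $\HD(\Lambda_\mu)>3-\eps$ for all $\mu\in(\mu_*,\mu_*+\d)$.

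For $\mu>\mu_*$ I would realize $\Lambda_\mu$ as the survivor set of a \emph{map with a hole}, in the spirit of \cite{HV01,Dys05} and of the set-up in \cite{HV05}. The periodic sink $p_\mu$ carries a normally hyperbolic strong-stable bundle, so its basin $W^s(p_\mu)$ is saturated by the strong-stable foliation $\mathcal F^{ss}_\mu$ and equals $\bigcup_{n\ge0}g_\mu^{-n}(H_\mu)$, where $H_\mu$ is the immediate basin of $p_\mu$; thus $\Lambda_\mu$ is precisely the set of points whose forward orbit never meets $H_\mu$. Quotienting by $\mathcal F^{ss}_\mu$ (which is $C^1$ by the bunching built into the family) reduces the problem to the $2$-dimensional expanding dynamics $\bar g_\mu$ on the leaf space: $\bar\Lambda_\mu$ is the survivor set of $\bar g_\mu$ with hole $\bar H_\mu$, and the $C^1$ foliation chart gives $\HD(\Lambda_\mu)=1+\HD(\bar\Lambda_\mu)$ locally. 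The map $\bar g_\mu$ is uniformly expanding away from the neutral point $p$; it is near $p$ that the expansion degenerates as $\mu\downarrow\mu_*$, which is the source of the whole difficulty.

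Two quantitative inputs then finish the proof. First, the volume estimate $\Leb(\bar H_\mu)\to 0$ as $\mu\downarrow\mu_*$: the invariant circle born at the bifurcation collapses onto $p$ at $\mu_*$, so for $\mu$ slightly above $\mu_*$ it confines the immediate basin to a region of vanishing size, which the Hopf normal form together with the estimates in \cite{HV05} makes quantitative. Second, the dimension-versus-volume-of-hole estimate of \cite{HV01,Dys05}, of the form $\HD(\bar\Lambda_\mu)\ge 2-\eta\big(\Leb(\bar H_\mu)\big)$ with $\eta$ continuous and $\eta(0)=0$: most transparently through thermodynamic formalism — the topological pressure of $-s\log|D\bar g_\mu|$ over the survivors, whose root is $\HD(\bar\Lambda_\mu)$ in the (conformal) model of \cite{HV05}, converges as the hole shrinks to the full pressure, whose root is $2$ — or else through a direct mass-distribution construction of a Cantor subset of $\bar\Lambda_\mu$. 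Combining, $\HD(\Lambda_\mu)\ge 1+\big(2-\eta(\Leb(\bar H_\mu))\big)\to 3$ as $\mu\downarrow\mu_*$; together with the trivial side this gives $\lim_{\mu\to\mu_*}\HD(\Lambda_\mu)=3$, and the box-dimension statement follows from $\HD(\Lambda_\mu)\le\BD(\Lambda_\mu)\le\BD(\mathbb{T}^3)=3$.

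The main obstacle is uniformity in $\mu$ in the second input: the constants in the maps-with-holes estimate deteriorate as the expansion of $\bar g_\mu$ near $p$ weakens, so the core of the argument is an inducing step — replace $\bar g_\mu$ by its first-return map to a region bounded away from $p$, which is uniformly expanding with countably many full branches and distortion bounded uniformly in $\mu$, rewrite $\bar H_\mu$ and $\bar\Lambda_\mu$ in the induced coding (where the hole is forced to comprise all branches feeding the basin, and so cannot be shrunk for free at fixed $\mu$), and run the estimate there, checking that the return-time tails do not offset the smallness of $\Leb(\bar H_\mu)$. One must also confirm that the dynamically determined hole $H_\mu$ satisfies the geometric hypotheses of \cite{HV01,Dys05}, and that the identity $\HD(\Lambda_\mu)=1+\HD(\bar\Lambda_\mu)$ survives the reduction with an error that is uniformly small near $\mu_*$.
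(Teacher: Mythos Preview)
Your outline is broadly aligned with the paper: reduce to the two--dimensional map $f_\mu=\pi_\mu\circ g_\mu$ on $W^c(p_\mu)$ via the Markov partition, observe $\Leb(H_\mu)\to 0$ as $\mu\downarrow\mu_*$, obtain a lower bound $\HD(\Lambda_{f_\mu})>2-\eps$, and then add $1$ using the $C^{1+\beta}$ strong--stable holonomy. You also correctly isolate the real difficulty, namely that the constants in any off--the--shelf ``dimension vs.\ size of hole'' estimate deteriorate as $\mu\downarrow\mu_*$ because the expansion of $f_\mu$ near $p_\mu$ degenerates.

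Where your proposal departs from the paper is precisely at this point, and there is a genuine gap. You invoke the estimates of \cite{HV01,Dys05} and then propose to salvage uniformity by passing to the first--return map to a region $A$ bounded away from $p_\mu$. The paper explicitly notes that Dysman's hypothesis $(NU_3)$ fails here: the volume of bad $n$--cylinders does \emph{not} decay at a rate independent of $\mu$, so her continuity result (Corollary~5 in \cite{Dys05}) is inapplicable. Your first--return remedy runs into two concrete problems. First, the induced system has countably many branches, while the Dysman--type lower bound you want to quote (Theorem~3 of \cite{Dys05}, the paper's Proposition~\ref{Teo5}) is stated for finitely many domains; you would need a genuine extension. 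Second, and more seriously, the ``check that the return--time tails do not offset the smallness of $\Leb(\bar H_\mu)$'' is the whole content of the problem: as $\mu\downarrow\mu_*$ the return times to $A$ diverge on a set of asymptotically full measure (indeed $\Leb(\bar\Lambda_\mu)=0$ for every $\mu>\mu_*$, since $\HD(\bar\Lambda_\mu)<2$), so the competition between long returns and the shrinking hole is exactly what must be quantified, and you give no mechanism for doing so.

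The paper resolves this differently, via its Theorem~\ref{t.thmA}. The key idea is to scale the expansion threshold with the hole: one declares an $n$--cylinder ``bad'' if its average least expansion $\phi_n$ is at most $c_0\mu_f$ (not at most some fixed $c_0$), and the new hypothesis $(\A_2)$ asks that $\Leb(B_n(c_0\mu_f))\le\delta(n,\mu_f)$ with $\delta(n,\mu_f)\to 0$ as $\mu_f\to 0$. One then induces at a \emph{fixed finite} horizon $n_0$: the map $F_{n_0}$ is $(c_0\mu_f)$--expanding with finitely many branches, and its hole satisfies $\Leb(H_{F_{n_0}})\le\delta(n_0,\mu_f)+\mu_f\sum_{j=0}^{n_0-1}S^j(m+1)^j\to 0$, so Dysman's finite--branch estimate applies directly. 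The verification of $(\A_2)$ for the Hopf family is then a combinatorial volume estimate (Section~\ref{sec2}, culminating in $\Leb(B_n(c_0\mu_{f_\mu}))\le \tfrac{\mu}{-4\log\mu}\,n^2 e^{-\mu n/4}$), drawing on the itinerary analysis of \cite{HV05}. This $\mu$--dependent threshold is the device that makes the uniformity work, and it is absent from your sketch.
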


Let us recall the definitions of the Hausdorff dimension and the box dimension.
The \emph{Hausdorff dimension} of a compact metric space $X$
is the unique real number $\HD(X)$ such that
$m_\alpha(X) = \infty$ for any $\alpha < \HD(X)$ and
$m_\alpha(X) = 0$ for any $\alpha > \HD(X)$,
where $m_\alpha$ is the Hausdorff $\alpha$-measure, defined by
\begin{align*}
	m_\alpha(X)=\lim_{\varepsilon \rightarrow 0}
	\inf \bigg\{
	\sum_{U\in \mathcal{U}}(\diam\ U)^\alpha \colon &  \mathcal{U} \text{ is an open
		covering of } X \text{ with } \\ & \qquad \diam\ U \leq \varepsilon
	\text{ for all } U \in \mathcal{U} \bigg\}.
\end{align*}
The \emph{box dimension} (or \emph{limit capacity}) $\BD(X)$ is defined by:
$$
\BD(X)=\limsup_{\varepsilon\to 0}
\frac{\log n(X,\varepsilon)}{|\log\varepsilon|}\,,
$$
where $n(X,\varepsilon)$ is the smallest number of
$\varepsilon$-balls needed to cover $X$.
One always has $\HD(X)\le \BD(X)$.

Our result extends naturally to higher dimensions: for families derived from Anosov diffeomorphisms on $\mathbb{T}^D$ with two-dimensional expanding direction
\[
\lim_{\mu \to \mu_*} \HD(\Lambda_\mu)
= \lim_{\mu \to \mu_*} \BD(\Lambda_\mu)
= D.
\]

Related families have been studied since Carvalho \cite{Car93}, where non-hyperbolic attractors were constructed and shown to support unique SRB measures. This was later generalized by Bonatti and Viana \cite{BoV00} to a broader class of partially hyperbolic attractors. In particular, they observed that such attractors have empty interior and dense strong-unstable leaves, implying transitivity.

For $\mu$ close to $0$, the diffeomorphisms $g_\mu$ are partially hyperbolic: the tangent bundle admits a dominated splitting $TM = E^{ss}_\mu \oplus E^{cu}_\mu$,
where $E^{ss}_\mu$ is uniformly contracting. The system also admits invariant foliations: a strong-stable foliation $\mathcal{F}^{ss}_\mu$ and a center-unstable foliation $\mathcal{F}^{cu}_\mu$, the latter being close to the unstable foliation of $\hat{g}_{-1}$. The strong-stable foliation is of class $C^{1+\delta}$; see Section~\ref{ss.partialhyperbolicity}.

The proof of Theorem~\ref{t.thmB} is a consequence of the continuity of the Hausdorff dimension of the intersection of $\Lambda_\mu$ with any center-unstable leaf of the central foliation $\mathcal{F}^c_\mu$ (see Section~\ref{s.proofs}).

\begin{maintheorem}
\label{t.thmC}
Let $(g_\mu)_\mu$ be as above. Then, for every $x$,
\[
\lim_{\mu \to \mu_*} \HD\big(\Lambda_\mu \cap \mathcal{F}^c_\mu(x)\big) = 2.
\]
In particular, $\displaystyle \lim_{\mu \to \mu_*} \BD\big(\Lambda_\mu \cap \mathcal{F}^c_\mu(x)\big) = 2$.
\end{maintheorem}

The proof of Theorem~\ref{t.thmC} uses an abstract class of dynamical systems, called \emph{maps with holes}, induced by $g_\mu$ on the leaves of $\mathcal{F}^c_\mu$. These are analogous to the systems studied in \cite{HV01, Dys05}, and satisfy suitable decay estimates on the volume of \emph{bad cylinders}. Precise definitions and conditions are given in Section~\ref{ss.maps}.

\subsection{Maps with Holes}
\label{ss.maps}
The works \cite{HV01} and \cite{Dys05} study an abstract class of dynamical systems known as \emph{maps with holes}. In the present paper, we employ a closely related framework.

Let $M$ be a $d$-dimensional Riemannian manifold, $d \geq 1$. A map $f \colon M \to M$ is called a \emph{map with holes} if it satisfies the following conditions:

\begin{itemize}
\item[$(\mathcal{A}_1)$] There exist domains $R_0, \dots, R_m$ with finite inner diameter and pairwise disjoint interiors such that, for each $i = 0, \dots, m$, the restriction $f|_{R_i}$ is a $C^{1+\varepsilon}$ diffeomorphism onto a domain $W_i$ satisfying:
\end{itemize}
\begin{itemize}
\item $W_i \supset \bigcup_{j=0}^m R_j$;
\item the set $H_i := W_i \setminus \bigcup_{j=0}^m R_j$, called \emph{hole}, has nonempty interior;
\item the boundary $\partial R_i$ has box dimension strictly less than $d$.
\end{itemize}

Here, a \emph{domain} is a compact path-connected subset of $M$. The condition of finite inner diameter means that there exists $L > 0$ such that any two points in $R_i$ can be joined by a piecewise $C^1$ curve contained in $R_i$ of length at most $L$. We say that $f$ is of class $C^{1+\varepsilon}$ if it is differentiable and $|\det Df|$ is $\varepsilon$-Hölder continuous.

\begin{remark}
\label{r.weakmarkov}
It is sufficient to assume that each $W_i$ contains the union of some subcollection of the sets $R_j$, and that forward iterates of each $R_i$ eventually intersect a hole; that is, for some $k$, the image contains a domain $W_k$ such that $H_k = W_k \setminus \bigcup_j R_j$ has nonempty interior. See \cite[Remark~1]{HV01}.
\end{remark}

The \emph{repeller} of a map with holes $f$ is the set
\[
\Lambda_f = \{x \in M : f^n(x) \in \bigcup_{i=0}^m R_i \text{ for all } n \geq 0\},
\]
that is, the set of points whose forward orbits never enter any hole.

	\begin{figure}[phtb]
	\begin{center}
		\includegraphics[height=4.0cm]{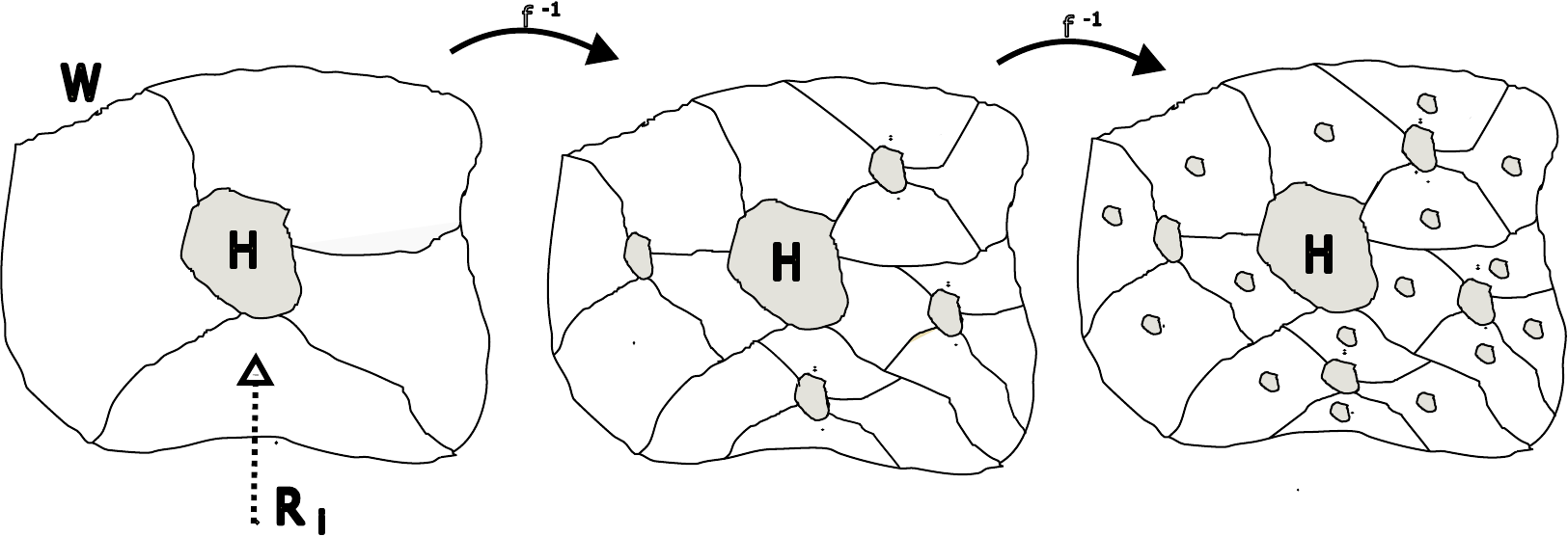}
		\caption{The repeller of a piecewise smooth map with holes.}
	\end{center}
\end{figure}

For $n \geq 1$ and $\alpha_1, \dots, \alpha_n \in \{0,1,\dots,m\}$, an \emph{$n$-cylinder} is any set of the form
\[
C(\alpha_1,\dots,\alpha_n)
= R_{\alpha_1} \cap f^{-1}(R_{\alpha_2}) \cap \cdots \cap f^{-n+1}(R_{\alpha_n}).
\]
For each $n$, the collection of $n$-cylinders forms a cover of $\Lambda_f$.

Let $f$ be a map with holes. We call the \emph{average least expansion} of an $n$-cylinder $C(\alpha_1,\dots,\alpha_n)$ by
\begin{equation}
\label{eq.phi}
\phi_n(\alpha_1,\dots,\alpha_n)
= \frac{1}{n} \sum_{j=1}^n
\inf_{x \in C_j} \log \|Df^{-1}(f^j(x))\|^{-1},
\end{equation}
where $C_j = C(\alpha_1,\dots,\alpha_j)$. For convenience, for all $j \ge i$ we interpret $Df^{-i}(f^j(y))$ as the inverse of $Df^{i}(f^{j-i}(y))$ .

If $\phi_n(\alpha_1,\dots,\alpha_n) > c_0 > 0$, then for every $x \in C(\alpha_1,\dots,\alpha_n)$, 
$\prod\limits_{j=1}^n \|Df^{-1}(f^j(x))\| \le e^{-c_0 n}$,
and hence $\|Df^{-n}(f^n(x))\| \le e^{-c_0 n}$. Thus, $Df^n$ expands every tangent vector at $x$ by at least $e^{c_0 n}$.

Let
\begin{equation}
\label{eq.hf}
H_f = \bigcup_{i=0}^m H_i
\quad \text{and} \quad
\mu_f = \Leb(H_f),
\end{equation}
where $\Leb$ denotes Lebesgue measure. Given $c_0 > 0$, define $B_n(c_0 \mu_f)$ as the set of points contained in some $n$-cylinder $C(\alpha_1,\dots,\alpha_n)$ such that
\[
\phi_j(\alpha_1,\dots,\alpha_j) \le c_0 \mu_f,
\quad \text{for all } j = 1,\dots,n.
\]
That is,
\begin{equation}
\label{eq11}
\begin{aligned}
B_n(c_0\lf) = \Big\{x \in M \colon x \in C(\alpha_1, \dots ,\alpha_n) \text{ with } &
\phi_j(\alpha_1, \dots ,\alpha_j) \leq c_0\lf , \\
&{\rm\text{for all}}\; j \in \{1,\dots,n\}\Big \}.
\end{aligned}
\end{equation}
We note that $B_n(c_0\lf)$ is a union of $n$-cylinders and we refer to such cylinders as \emph{bad $n$-cylinders}.

\medskip

We assume the following condition on the decay of bad cylinders:

\begin{itemize}
\item[$(\mathcal{A}_2)$] There exist positive constants $n_0, c_0, \mu_0$ and a function
$\zeta \colon \mathbb{N} \times [0,\Leb(M)) \to [0,\infty)$ such that for all $n \ge n_0$ and $0 \le \mu_f < \mu_0$ 
\[
\Leb\big(B_n(c_0 \mu_f)\big) \le \zeta(n,\mu_f),
\]
Furthermore, for each fixed $n \ge n_0$, $\zeta(n,\mu_f) \to 0$ as $\mu_f \to 0$.
\end{itemize}

We denote $f_j = f|R_j$, $j \in \{0, \cdots , m\}$ and we call \emph{inverse branch} of $f$ any map of the form $f^{(-1)} = f_j^{-1}$.

\begin{maintheorem}
\label{t.thmA}
Let $f \colon M \to M$ be a map with holes on a $d$-dimensional Riemannian manifold, $d \ge 1$, satisfying $(\mathcal{A}_1)$ and $(\mathcal{A}_2)$. For some constant $S> 0$, assume that $\sup \|Df^{(-1)}\| \le S$ for any inverse branch of $f$.
Then, for every $\varepsilon > 0$, there exists $\delta = \delta(\varepsilon, S, m, c_0, n_0, \mu_0) > 0$ such that, if $\Leb(H_f) < \delta$, then
\[
\HD(\Lambda_f) > d - \varepsilon.
\]
In particular, $\BD(\Lambda_f) > d - \varepsilon$.
\end{maintheorem}

Condition $(\mathcal{A}_2)$ controls the decay of the volumes of bad cylinders as a function of both the length $n$ and the size of the hole. In contrast, Dysman \cite{Dys05} assumes exponential decay in $n$, uniformly with respect to the hole size.

\begin{remark}
If $(\mathcal{A}_2)$ is strengthened by requiring that $\zeta(n,\mu_f) \to 0$ as $n \to \infty$ for each fixed $\mu_f > 0$, then one recovers the classical conclusion that $\HD(\Lambda_f)$ and $\BD(\Lambda_f)$ are strictly less than $d$; see \cite{HV01, Dys05}. 
However, if these dimensions are equal to $d$ then Theorem~\ref{t.thmA} would follow trivially.
\end{remark}

Dysman \cite[Corollary~5]{Dys05} also studies the continuity of fractal dimensions for families of maps with holes. However, her results do not apply to the families $(g_\mu)$ considered here. Specifically, condition $(NU_3)$ in \cite{Dys05} fails in our setting: in her framework, the volume of bad $n$-cylinders decays uniformly in $n$, independently of the hole size, whereas in our case the decay depends on both $n$ and $\mu_f$.

Section~\ref{s.def} reviews the construction of the families $(g_\mu)$ and their main properties. In Section~\ref{s.proofs}, we derive Theorems~\ref{t.thmB} and \ref{t.thmC} from Theorem~\ref{t.thmA}. The proof of Theorem~\ref{t.thmA} is given in Section~\ref{s.mwh}, while Section~\ref{s.lyap} establishes condition $(\mathcal{A}_2)$ for the families induced by $(g_\mu)$.

We conclude by noting that our results also apply to certain families studied by Díaz and Viana \cite{DV89}, including examples exhibiting both continuity and discontinuity of fractal dimensions at bifurcation parameters.

\section{Definitions and basic results}
\label{s.def}

In this section, we recall the constructions and definitions mentioned above and review some results necessary for the proofs.

\subsection{Partial hyperbolicity and invariant foliations}
\label{ss.partialhyperbolicity}

We start with the following property:
\begin{itemize}
\item The diffeomorphisms $g_\mu$ are partially hyperbolic for every $\mu$ sufficiently close to zero: there exists a $Dg_\mu$-invariant dominated splitting $TM = E_\mu^{ss} \oplus E_\mu^{cu}$, where $E_\mu^{ss}$ is uniformly contracting.
\end{itemize}

Since partial hyperbolicity is an open property in the $C^1$ topology, it suffices to verify it for the reference family $\hat{g}_\mu$ for $\mu$ small. This is established in the following proposition.

\begin{proposition}\label{p.bundle}
Fix $\varepsilon > 0$ and assume that $\delta_0$ is sufficiently small. Then, for every $\mu$ in a neighborhood of zero, there exists a $D\hat{g}_\mu$-in\-var\-i\-ant splitting $TM = E_\mu^{ss} \oplus E_\mu^{cu}$, and constants $K_s, K_c, K > 0$ such that:
\begin{itemize}
\item[(a)] $E_\mu^{cu}$ coincides with the $(\rho,\theta)$-plane;
\item[(b)] $\| D\hat{g}_\mu^n | E_\mu^{ss} \| \le K_s \lambda^n$ for all $n \ge 1$ (uniform contraction);
\item[(c)] $\| D\hat{g}_\mu^n | E_\mu^{cu} \| \le K_c (\sigma + \varepsilon)^n$ for all $n \ge 1$;
\item[(d)] $\| D\hat{g}_\mu^{-n} | E_\mu^{cu} \| \le K (1 - \mu)^{-n}$ for all $n \ge 1$;
\item[(e)] $\| D\hat{g}_\mu^{-n} | E_\mu^{cu} \| \cdot \| D\hat{g}_\mu^n | E_\mu^{ss} \| < \tfrac{1}{2}$ for some $n \ge 1$ (domination).
\end{itemize}
\end{proposition}

\begin{proof}
See \cite[Proposition 2.1]{HV05}.
\end{proof}

\begin{corollary}
\label{c.foliation}
There exists $n \ge 1$ such that
\[
\sup_M \big\|(D g_\mu^n | E^{cu}_\mu)^{-1}\big\| \cdot
\big\|D g_\mu^n | E^{ss}_\mu \big\| \cdot
\big\|D g_\mu^n | E^{cu}_\mu \big\| < 1
\]
for every $\mu$ sufficiently close to zero.
\end{corollary}

\begin{proof}
See \cite[Corollary 2.2]{HV05}, together with the continuity of the invariant subbundles $E_\mu^{ss}$ and $E_\mu^{cu}$ in the $C^1$ topology.
\end{proof}

\begin{remark}\label{r.expansion}
Since $\hat{g}_\mu$ coincides with $G$ outside $V$, it expands the center-unstable bundle $E^{cu}_\mu$ on $M \setminus V$:
\[
\big\| (D\hat{g}_\mu(x) | E^{cu}_\mu(x))^{-1} \big\| \le \sigma^{-1}
\quad \text{for all } x \in M \setminus V.
\]
The same property holds for any $g_\mu$ sufficiently close to $\hat{g}_\mu$ (possibly after slightly decreasing $\sigma > 3$).
\end{remark}

We now comment on the significance of Corollary~\ref{c.foliation}. By the theory of Hirsch–Pugh–Shub~\cite{HP70,HPS77}, this condition ensures that, throughout the bifurcation, the map $g_\mu$ admits invariant foliations: a \emph{center-unstable} (or \emph{central}) foliation $\mathcal{F}^c_\mu$, whose tangent bundle $E^{cu}_\mu$ is everywhere contained in the cone field $C^{cu}_\mu$, and a \emph{strong-stable} foliation $\mathcal{F}^{ss}_\mu$, whose tangent bundle $E^{ss}_\mu$ is everywhere contained in the cone field $C^{ss}_\mu$ and whose leaves are uniformly contracted by $g_\mu$. Moreover, the strong-stable foliation enjoys additional regularity:

\begin{corollary} \label{c.folheacao}
There exists $\beta > 0$ such that, for every $\mu$ sufficiently close to zero, the diffeomorphism $g_\mu$ admits a $C^{1+\beta}$ strong-stable foliation $\mathcal{F}^{ss}_\mu$ of codimension $2$, invariant under $g_\mu$, whose tangent bundle is contained in the cone field $C^{ss}_\mu$. In particular, its leaves are uniformly contracted by $g_\mu$.
\end{corollary}

Furthermore, again by \cite{HP70,HPS77}, the center-unstable foliation $\mathcal{F}^c_\mu$ is topologically conjugate to the unstable foliation $\mathcal{F}^u$ of $G$: there exists a homeomorphism mapping leaves of one foliation to leaves of the other. This follows from the fact that $\mathcal{F}^c_\mu$ remains normally contracting throughout the isotopy (together with the plaque expansiveness condition). In particular, all center-unstable leaves are dense in $M$.

\subsection{Hopf bifurcation}
\label{ss.hopf}

We next show that any one-parameter family of diffeomorphisms $(g_\mu)_\mu$ that is $C^5$-close to $(\hat{g}_\mu)_\mu$ exhibits the following behavior:

\begin{itemize}
\item There exists a curve $(p_\mu)_\mu$ of fixed points of $(g_\mu)_\mu$, and a parameter value $\mu_*$ close to zero such that $p_\mu$ is a hyperbolic saddle for $\mu < \mu_*$, undergoes a generic Hopf bifurcation at $\mu = \mu_*$, and becomes an attracting fixed point for $\mu > \mu_*$. Moreover, $p_\mu$ remains inside $V$ for all $\mu$.
\end{itemize}

We begin by recalling the notion of a Hopf bifurcation, see also \cite{MM76}. Let $\varphi_\mu \colon U \to \mathbb{R}^2$ be a $C^1$ one-parameter family of embeddings of an open set $U \subset \mathbb{R}^2$. Suppose there exists a curve $p_\mu$ of fixed (or periodic) points and a parameter value $\mu_*$ such that:

\begin{itemize}
\item[(a)] $D\varphi_\mu(p_\mu)$ has non-real complex eigenvalues $\lambda(\mu) \neq \overline{\lambda(\mu)}$ for all $\mu$ near $\mu_*$;
\item[(b)] $|\lambda(\mu_*)| = 1$, and $\lambda(\mu_*)^k \neq 1$ for $k = 1,2,3,4$;
\item[(c)] $\dis\frac{d}{d\mu}|\lambda(\mu)| \neq 0$ at $\mu = \mu_*$.
\end{itemize}

Then, as $\mu$ crosses $\mu_*$, the fixed point $p_\mu$ changes its stability: it transitions from a repeller to an attractor or vice versa, depending on the sign of the derivative in condition (c).

If the family $\varphi_\mu$ is $C^5$, then near the bifurcation, it admits the normal form (in polar coordinates)
\begin{equation}\label{eq.normalform}
\begin{aligned}
\varphi_{\mu}(\rho,\theta)
&= F_\mu(\rho,\theta) + O(\rho^5), \\
F_\mu(\rho,\theta)
&= \big(a(\mu)\rho + b_1(\mu)\rho^3,\ \theta + \phi(\mu) + b_2(\mu)\rho^2 \big),
\end{aligned}
\end{equation}
where $a(\mu)$ and $\phi(\mu)$ denote, respectively, the modulus and the argument of $\lambda(\mu)$. Assume further that

\begin{itemize}
\item[(d)] $b_1(\mu_*) \neq 0$.
\end{itemize}

Under these conditions, we say that the family $(\varphi_\mu)_\mu$ \emph{unfolds a ge\-ner\-ic Hopf bifurcation} at $\mu_*$. A characteristic feature of this bifurcation is the emergence of an invariant circle $\mathcal{C}(\varphi_\mu)$, close to the invariant circle of the normal form
\begin{equation}\label{eq.normalformcircle}
\mathcal{C}(F_\mu) = \{(\rho,\theta) : b_1(\mu)\rho^2 = 1 - a(\mu)\}.
\end{equation}
This invariant circle exists either for $\mu > \mu_*$ or for $\mu < \mu_*$, depending on the signs of $da/d\mu$ and $b_1(\mu)$ at $\mu = \mu_*$.

More generally, let $(\psi_\mu)_\mu$ be a family of diffeomorphisms on a manifold, admitting a smooth curve of fixed (or periodic) points $p_\mu$, and suppose there exists $\mu_*$ such that:

\begin{itemize}
\item[(i)] $D\psi_{\mu_*}(p_{\mu_*})$ has a pair of complex conjugate eigenvalues of modulus $1$, while all other eigenvalues lie either strictly inside or strictly outside the unit circle;
\item[(ii)] there exists a smooth family $(N_\mu)_\mu$, defined for all $\mu$ close to $\mu_*$\,, of $\psi_\mu$-invariant (central) manifolds through $p_\mu$\,, of dimension $2$ and tangent to the
eigenspace associated to the pair of complex eigenvalues in (i);
\item[(iii)] the restriction $\varphi_\mu = \psi_\mu|{N_\mu}$ unfolds a generic Hopf bifurcation at $\mu_*$ in the sense of (a)–(d).
\end{itemize}

Then $(\psi_\mu)_\mu$ is also said to \emph{unfold a generic Hopf bifurcation at $\mu_*$}. In this case, as $\mu$ crosses $\mu_*$, the fixed point changes from an attractor or a repeller to a saddle of codimension two (or vice versa), and an invariant circle $\mathcal{C}(\psi_\mu) = \mathcal{C}(\varphi_\mu)$ is created inside the central manifold $N_\mu$.

\medskip

Unfolding a generic Hopf bifurcation is an open property in the space of $C^5$ one-parameter families (the bifurcation parameter $\mu_*$ may vary with the family); see \cite{MM76}. Therefore, to establish the claim above, it suffices to verify it for the reference family $\hat{g}_\mu$.

\begin{proposition}\label{p.existehopf}
The family $(\hat{g}_\mu)_\mu$ unfolds a generic Hopf bifurcation at $\mu_* = 0$, with fixed point $p_\mu = 0$, $a(\mu) = 1 - \mu$, $b_1(\mu) > 0$, and $b_2(\mu) = 0$.
\end{proposition}

\begin{proof}
See \cite[Proposition~2.5]{HV05}.
\end{proof}

We now analyze the local dynamics of $f_\mu = g_\mu|{\{z=0\}}$. Let $V_1 = \{(\rho,\theta) : \rho^2 \le \delta_1\}$ be a neighborhood of $p_\mu = 0$ in the plane $\{z=0\}$, where $\delta_1 > 0$ is given by condition $(C_4)$.

\begin{proposition}\label{p.V1}
For $\mu$ sufficiently close to zero, the following hold:
\begin{enumerate}
\item $f_\mu$ is uniformly expanding outside $V_1$: $\|Df_\mu^{-1}\|^{-1} \ge \sigma_1$;
\item $b_1(\mu)\rho^2 \le 1/1000$ for all $(\rho,\theta) \in V_1$;
\item $\|Df_\mu^{-1} - DF_\mu^{-1}\| \le b_1(\mu)\rho^2/10$ in $V_1$;
\item $|\det Df_\mu - \det DF_\mu| \le 1/100 b_1(\mu)\rho^2/100$ in $V_1$;
\item the normal form $F_\mu$ satisfies $b_1(\mu) > 2|b_2(\mu)|$.
\end{enumerate}
\end{proposition}

\begin{proof}
See \cite[Proposition~2.6]{HV05}.
\end{proof}

In this setting, the fixed point $p_\mu$ transitions from a saddle (with two expanding directions) to an attractor as $\mu$ increases past $\mu_* = 0$. Since $b_1(\mu) > 0$ and $da/d\mu < 0$, the invariant circle $\mathcal{C}(g_\mu)$ is defined for $\mu > \mu_*$. From \eqref{eq.normalformcircle}, the invariant circle of the normal form has radius given by
\[
\rho_0 = \left(\frac{\mu}{b_1(\mu)}\right)^{1/2}.
\]

\noindent More generally, the invariant circle $\mathcal{C}(g_\mu)$ satisfies
\[
a(\mu)\rho + b_1(\mu)\rho^3 + O(\rho^5) = \rho.
\]
For $g_\mu$ close to $\hat{g}_\mu$ and $\mu$ near $\mu_*$, the circle lies close to the origin, and the higher-order terms are negligible. Thus, $\mathcal{C}(g_\mu)$ is contained in an annulus bounded by radii $\rho_1 < \rho_2$ with
\begin{equation}\label{eq.radii}
\rho_i \approx \left(\frac{\mu - \mu_*}{b_1(\mu)}\right)^{1/2}, \quad i = 1,2,
\end{equation}
where $\approx$ denotes equality up to a factor arbitrarily close to $1$.

\begin{remark}\label{r.reparametrization}
After reparametrizing the family, one may assume $\mu_* = 0$ and $a(\mu) = 1 - \mu$. We adopt this normalization throughout.
\end{remark}

\subsection{Markov partitions}
\label{ss.Markov}

For any family of diffeomorphisms $(g_\mu)_\mu$ sufficiently close to $(\hat{g}_\mu)_\mu$, there exists a family $(G_\mu)_\mu$ of Anosov diffeomorphisms of $\mathbb{T}^3$, close to $G$, such that $g_\mu = G_\mu$ outside $V$ for all $\mu$.
For further details, see \cite[Section 2.3]{HV05}.
This ensures that the maps $(g_\mu)_\mu$ admit Markov partitions, although they are not necessarily generating.

The primary use of this property is to ensure that the maps \(g_{\mu }\) admit Markov partitions (which are not necessarily generating):

\begin{remark} 
There exists a Markov partition $\SSS_\mu$ for $g_{\mu }$ such that $V$ is contained in some of Markov rectangles. 
Furthermore, the image of any rectangle intersects at most \(\eta \le 1000\sigma^2\) others.
\end{remark}

To clarify this property, we recall some standard definitions (see also \cite{Bo75}). 
A \emph{Markov rectangle} for an Anosov diffeomorphism \(f \colon M\to M\) is a small compact domain \(S_i \subset M\) equal to the closure of its interior, such that for any \(x, y \in S_i\), the unique intersection point $[x,y]$ in $W^u_{loc}(x) \cap W^s_{loc}(y)$ also lies in \(S_{i}\). Let \(W^s_i(x)\) and \(W^u_i(x)\) denote the connected components of \(W^s(x) \cap S_i\) and \(W^u(x) \cap S_i\) containing \(x\), respectively.

A \emph{Markov partition} is then a finite collection $\SSS$ of Markov rectangles covering \(M\) with pairwise disjoint interiors, satisfying the following invariance condition:
\begin{equation}
\label{eq.markov} 
f(W_i^s(x)) \subset W_j^s(f(x)) \quad\text{and}\quad f(W_i^u(x)) \supset W_j^u(f(x)) 
\end{equation}
for all \(x \in S_i \cap f^{-1}(S_j)\).
Anosov diffeomorphisms and hyperbolic basic sets always admit Markov partitions of arbitrarily small diameter \cite{Bo75, Si68}. 
Such partitions are \emph{generating}, meaning there is at most one point with any given itinerary relative to the partition.

The stable boundary \(\partial ^{s}S_{i}\) consists of points \(x\) that are not in the interior of \(W^u_i(x)\) relative to the local unstable manifold; the unstable boundary \(\partial ^{u}S_{i}\) is defined dually. Continuity of the bracket map \([x,y]\) implies that \(\partial ^{s}S_{i}\) (resp. \(\partial ^{u}S_{i}\)) is a union of stable (resp. unstable) sets $W_i^s(z)$ (resp. $W_i^u(z))$. 
We denote the collective boundaries by $\partial^s \SSS = \cup_i \partial^s S_i$ and $\partial^u \SSS = \cup_i \partial^u S_i$. 
By \eqref{eq.markov}, these satisfy $f(\partial^s \SSS) \subset \partial^s \SSS$ and $f^{-1}(\partial^u \SSS) \subset \partial^u \SSS$.

Finally, a Markov partition allows us to define a quotient map \(\phi \) on the space of stable leaves \(\{W^s_i(x)\}\). 
This map sends each \(W_i^s(x)\) to the stable leaf containing its image ($\phi$ may be multivalued at the boundaries of the Markov rectangles). 
The map \(\phi \) is uniformly expanding and satisfies the Markov property: if \(R_i = \{W_i^s(x) : x \in S_i\}\), then
\begin{equation}
\label{eq.markov2}  
\phi(\text{int}(R_i)) \cap \text{int}(R_j) \neq \emptyset \implies \phi(R_i) \supset R_j.  
\end{equation}

Returning to our setting, recall that we have fixed a Markov partition $\SSS$ for the Anosov diffeomorphism $G$, such that the closure of $V$ is contained in the interior of some Markov rectangle $S_0 \in \SSS$.
By construction, each $G_\mu$ is $C^1$-close to $G$. 
Since Anosov diffeomorphisms are structurally stable, each $G_\mu$ is topologically conjugate to $G$: there exists a homeomorphism $h_\mu:M\to M$ such that 
$$
G_\mu\circ h_\mu = h_\mu\circ G.
$$
Using this conjugacy, we define a family of partitions
$$
\SSS_\mu = \big\{S_{i,\mu}=h_\mu(S_i)\ :\ S_i \in \SSS \big\},
$$
which yields a Markov partition for $G_\mu$. 
Moreover, the conjugacy $h_\mu$ is $C^0$-close to the identity whenever $G_\mu$ is close to $G$.

If the family $(g_\mu)_\mu$ is sufficiently close to $(\hat{g}_\mu)_\mu$, we can ensure that the closure of $V$ is contained in the interior of $S_{0,\mu}=h_\mu(S_0)$ for every $\mu$. 
Since $g_\mu$ and $G_\mu$ coincide outside $V$, it follows that $g_\mu(S_{i,\mu})=G_\mu(S_{i,\mu})$ for every $i$, including $i=0$.
Furthermore, since the forward iterates of all points in $\partial^s \SSS_\mu$ under $G_\mu$ remain outside the region $V$, they coincide with the corresponding iterates under $g_\mu$.

The same is true for the partially hyperbolic diffeomorphisms $g_\mu$, with strong-stable leaves playing the role of stable manifolds. 
Indeed, the forward iterates of the points in $\partial^s (h_\mu(S_i))$ never enter the perturbation region $V$, and thus their local strong-stable leaves for $g_\mu$ coincide with their local stable manifolds for $G_\mu$.
As a consequence, we can still define a quotient map on the space of local strong-stable leaves of $g_\mu$, which retains the Markov property in the sense of \eqref{eq.markov2}: the domains $R_{i,\mu}$ for the quotient maps of $g_\mu$ and $G_\mu$ coincide, as do their images under these maps.

It is in this context that we refer to \(\mathcal{S}_{\mu }\) as a Markov partition for the partially hyperbolic maps \(g_{\mu }\), though we remark that they are generally not generating. 

The control over the number of intersecting rectangles stems from the construction of \(\mathcal{S}_{\mu }\) relative to the initial map \(G\). 
Heuristically, the number of intersections is governed by the unstable/central Jacobian, \(\sigma ^{2}\); the factor of \(1000\) provides a safe margin for oscillations arising from variations in the diameters of the rectangles.

\subsection{Summary}
\label{s.summary}

We summarize the main conclusions of this section. The proofs of our results rely only on the properties $(H_1)$–$(H_6)$ listed below.

We consider one-parameter families $(g_\mu)_\mu$, $\mu \in [-1,1]$, of $C^r$ diffeomorphisms, $r \geq 5$, on the three-dimensional torus $M = \mathbb{T}^3$, satisfying the following conditions:

\begin{itemize}
\item[$(H_1)$] There exist an open set $V \subset M$ and a continuous family $(G_\mu)_\mu$ of transitive Anosov diffeomorphisms such that
\[
\|DG_\mu^{-1} \mid E^u_\mu\|^{-1} \ge \sigma > 3,
\]
with $g_{-1} = G_{-1}$ and $g_\mu = G_\mu$ outside $V$ for all $\mu$. Moreover, for each $\mu$, the set $V$ is contained in the interior of some  rectangle of a Markov partition $\mathcal{S}_\mu$ for $G_\mu$.

\item[$(H_2)$] There exists a curve $(p_\mu)_\mu$ of fixed (or periodic) points and a parameter $\mu_*$ such that $p_\mu$ is a hyperbolic saddle for $g_\mu$ when $\mu < \mu_*$, undergoes a generic Hopf bifurcation at $\mu = \mu_*$, and becomes an attractor for $\mu > \mu_*$. Moreover, $p_\mu$ remains inside $V$ for all $\mu$.

\item[$(H_3)$] For all $\mu$ sufficiently close to zero, the diffeomorphisms $g_\mu$ are partially hyperbolic: there exists a $Dg_\mu$-invariant dominated splitting
\[
TM = E_\mu^{ss} \oplus E_\mu^{cu},
\]
where $E_\mu^{ss}$ is uniformly contracting.

\item[$(H_4)$] There exists $n \ge 1$ such that, for every $\mu$ sufficiently close to zero,
\[
\sup_M \big\|(D g_\mu^n \mid E^{cu}_\mu)^{-1}\big\| \cdot
\big\|D g_\mu^n \mid E^{ss}_\mu \big\| \cdot
\big\|D g_\mu^n \mid E^{cu}_\mu \big\| < 1.
\]

\item[$(H_5)$] There exists a neighborhood $V_1$ of $p_\mu$ inside the center-unstable manifold $W^{cu}_\mu(p_\mu)$, with fixed radius, satisfying (1)–(5) of Proposition~\ref{p.V1}.

\item[$(H_6)$] The Markov partition $\mathcal{S}_\mu$ for $g_\mu$ can be chosen so that the image of any rectangle intersects at most $\eta \le 1000\sigma^2$ rectangles.
\end{itemize}

\section{Proof of Theorems~\ref{t.thmB} and~\ref{t.thmC}}
\label{s.proofs}

Theorems~\ref{t.thmB} and~\ref{t.thmC} are proved using Theorem~\ref{t.thmA}, the proof of which is provided in Section~\ref{s.mwh}.
Accordingly, we start by constructing a map with holes satisfying conditions $(\A_1)$ and $(\A_2)$.

\subsection{Constructing a map with holes}
\label{s.construction}

Our first step is to associate to each diffeomorphism $g_\mu$ a two-dimensional map with holes $f_\mu$. Let $\mathcal{S}_\mu = \{S_0, S_1, \ldots, S_m\}$ be a Markov partition for $g_\mu$ as in Section~\ref{ss.Markov}. 
For each $x \in S_i$, we denote by $W_i^s(x)$ the connected component of $W^{ss}(x) \cap S_i$ containing $x$.

For each $i \ge 1$, fix a domain $R_{i,\mu} \subset W^c(p_\mu)$ that intersects each stable leaf $W_i^s(x)$ in exactly one point. For $i=0$, we proceed similarly, but denote the domain by $R_{0,\mu}^*$ and require that $p_\mu \in R_{0,\mu}^*$. Define
\[
W_\mu = R_{0,\mu}^* \cup R_{1,\mu} \cup \cdots \cup R_{m,\mu}.
\]
Note that $W_\mu$ is not necessarily connected. Let
\begin{equation}
\label{eq.Hmu}
H_\mu = R_{0,\mu}^* \cap W_{\mathrm{loc}}^s(p_\mu)
\quad \text{and} \quad
R_{0,\mu} = R_{0,\mu}^* \setminus H_\mu.
\end{equation}

Define a projection $\pi_\mu \colon M \to W_\mu$ along the strong-stable leaves $W_j^s(x)$ within each rectangle $S_j$. By Corollary~\ref{c.folheacao}, the restriction of $\pi_\mu$ to each rectangle is of class $C^{1+\delta}$. We then define
\[
f_\mu = \pi_\mu \circ g_\mu \colon W_\mu \to W_\mu.
\]
Since $\pi_\mu$ is multivalued on the boundaries of the rectangles, the map $f_\mu$ is a multivalued $C^{1+\delta}$ map. We will show that $f_\mu$ fits into the framework of maps with holes introduced in $(\mathcal{A}_1)$, $(\mathcal{A}_2)$ and Remark~\ref{r.weakmarkov}.

\subsection{Verification of the Markov and hyperbolicity conditions}

We first verify the Markov condition $(\mathcal{A}_1)$ for $\mu > 0$ sufficiently small. For simplicity, we often suppress the parameter $\mu$ from the notation of the Markov rectangles. Define $H_0 = H_\mu$ and $H_i = \emptyset$ for $i \ge 1$. By construction, $H_0$ has nonempty interior.

Since $\mathcal{S}_\mu$ is a Markov partition for $g_\mu$, each image $f_\mu(R_i)$ is a union of domains $R_j$, for $j$ in some index set $J(i)$. Moreover, the Anosov diffeomorphisms $G_\mu$ are transitive, since they are conjugate to a linear Anosov map. It follows that, for each $0 \le i \le m$, there exists $\ell \ge 0$ such that $f_\mu^\ell(R_i)$ contains $R_0$. In addition,
$H_0 = f_\mu(R_0) \setminus \bigcup_{j=0}^m R_j$
has nonempty interior. Thus, the Markov property holds in the sense of Remark~\ref{r.weakmarkov}.

To complete the verification of $(\mathcal{A}_1)$, it remains to check that the domains $R_i$ have bounded inner diameter and that the box dimension of their boundaries is strictly smaller than $\dim W_\mu$. These properties follow from the next two results, stated in a slightly more general setting for Markov partitions of Anosov diffeomorphisms.

\begin{proposition}
\label{p.inner}
Let $\Phi \colon M \to M$ be an Anosov diffeomorphism with stable index $1$, and let $\mathcal{R}$ be a generating Markov partition. Then there exists $L > 0$ such that, for any $x,y$ in the same Markov rectangle $R \in \mathcal{R}$, there exists a piecewise smooth curve $\gamma \subset R$ connecting $x$ to $y$ with $\length(\gamma) \le L$.
\end{proposition}

\begin{proof}
See \cite[Proposition~3.4]{HV05}.
\end{proof}

\begin{proposition}
\label{p.limit_capacity}
Let $\Phi \colon M \to M$ be an Anosov diffeomorphism and $\mathcal{R}$ a Markov partition. Then the box dimension of the unstable boundary $\partial^u \mathcal{R}$ is strictly smaller than the unstable dimension of $\Phi$.
\end{proposition}

\begin{proof}
See \cite[Proposition~3.5]{HV05}.
\end{proof}

\medskip

We now turn to the hyperbolicity condition $(\mathcal{A}_2)$. Define
\[
\zeta(n,\mu_{f_\mu}) = \frac{\mu}{-4\log \mu} n^2 e^{-\frac{1}{4}\mu n}.
\]
The required estimate on the volume of bad cylinders is given by the following result.

\begin{proposition}
\label{cresc_exp}
For the family $(f_\mu)_\mu$ constructed above, there exist constants $n_0, c_0, \mu_0 > 0$ such that
\[
\Leb_2\big(B_n(c_0 \mu_{f_\mu})\big)
\le \zeta(n,\mu_{f_\mu})
\]
for all $n \ge n_0$ and $0 \le \mu < \mu_0$, where $\Leb_2$ denotes the normalized Lebesgue measure induced by the Riemannian metric.
\end{proposition}

The proof of this proposition is technical and is deferred to Section~\ref{s.lyap}.

\subsection{Proof of Theorems~\ref{t.thmB} and~\ref{t.thmC}}
\label{s.proof_thmB}

In the previous section, we verified that $f_\mu$ satisfies conditions $(\A_1)$ and $(\A_2)$. Since $f_\mu$ is obtained by restricting the diffeomorphisms $g_\mu$ to compact domains the norms $\|Df_\mu^{-1}\|$, $\mu \in [-1,1]$, are uniformly bounded.

Given $\varepsilon > 0$, Theorem~\ref{t.thmA} yields $\delta > 0$ such that, whenever $\Leb_2(H_\mu) < \delta$, one has $\HD(\Lambda_{f_\mu}) > 2 - \varepsilon$.
Combining \eqref{eq.radii} with the definition of $H_\mu$ in \eqref{eq.Hmu}, the volume $\Leb_2(H_\mu)$  is equal to $\mu$ up to a multiplicative constant.
In other words, given $\varepsilon > 0$  there is $\mu_0 > 0$ such that for all $\mu \le \mu_0$, $\HD(\Lambda_{f_\mu}) > 2 - \varepsilon$.
This proves Theorem~\ref{t.thmC}.

Note that 
$$
\Lambda_\mu = \bigcup_{x \in \Lambda_{f_\mu}} \big( \{x\} \times (\mathcal{F}^{ss}_\mu(x)) \big),
$$
Then, Theorem~\ref{t.thmB} follows from the fact that the holonomy along leaves of $\mathcal{F}^{ss}_\mu$ defined in Section \ref{ss.partialhyperbolicity} are of class $C^{1+\beta}$ (see Corollary~\ref{c.folheacao})
consequently Lipschitz, implies that
$$
\HD(\Lambda_\mu ) \ge \HD(\Lambda_{f_\mu}) + 1 > 3 - \varepsilon.
$$
Since $\varepsilon > 0$ is arbitrary, this completes the proof of Theorem~\ref{t.thmB}.

\section{Proof of Theorem~\ref{t.thmA}}
\label{s.mwh}

Let $f$ be a map with holes satisfying conditions $(\A_1)$ and $(\A_2)$.
For a fixed positive constant $c_0$ and some $0 \le \lf < \mu_0$, we define $S_0(c_0\lf)$ as the set of points that belong to some cylinder $C(\alpha_1)$ such that $ \phi_1(\alpha_1) > c_0\lf$.
Recall that the average least expansion $\phi_j$ of the $j$-cylinder $C(\alpha_1, \cdots , \alpha_j)$, $\alpha_i \in \{0,1,\cdots m\}$, is defined in \eqref{eq.phi}.

For  $n \ge 1$, let $f^{(-n)}$ denote an inverse branch of $f^n$ and define
\begin{align}
	\label{eq12}
	S_n(c_0\lf) & = B_n(c_0\lf)\setminus \left(B_{n + 1}(c_0\lf) \cup
	f^{(-n)}(H_f) \right) \nonumber \\
	& = \Big\{x \in C(\alpha_1,...,\alpha_n,\alpha_{n+1}) \colon
	\phi_{n+1}(\alpha_1,...,\alpha_n,\alpha_{n+1})> c_0\lf \; \\
	& \qquad \text{ and }\; \phi_j(\alpha_1,...,\alpha_j)\leq c_0\lf \;{\rm
		\text{ for all }}\;j \in\{1,...,n\}\Big\}, \nonumber
\end{align}
where $H_f$ is as in \eqref{eq.hf}. 
Observe that the family $\left\{ S_n(c_0\lf)\right\}_{n \ge 0}$ consists of pairwise disjoint sets. 
Moreover, for $n \geq 0$, the sets $S_n(c_0\lf)$ can be expressed as a union of $(n + 1)$-cylinders.
In particular, $f^{n+1}$ is well defined on $S_n(c_0\lf)$. 

Given $c > 0$, $F$ is called {\emph{$c$-expanding}} if there exists $N \ge 1$ such that every inverse branch  $F^{(-N)}$ satisfies $\|DF^{(-N)}(x) \| \leq e^{-cN}$ for all $x$ in its domain
	
For a positive integer $n$, consider the map induced by $f$:  $F_n : \bigcup\limits_{k = 0}^{n-1} S_k(c_0\lf)
	\rightarrow M$, defined by
	\begin{equation}\label{def4}
		F_n(x) = f^{k+1}(x)\quad \mbox{ if }\quad x \in S_k(c_0\lf), \quad k=0, \dots , n-1.
	\end{equation}
	
	We have the following proposition.	
	
	\begin{proposition}
		\label{p.F-expanding}
		$F_n$ is a $(c_0\lf)$-expanding map with holes.
	\end{proposition}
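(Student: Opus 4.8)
The plan is to check the two claims folded into the statement: that $F_n$ satisfies $(\A_1)$ and that it is $(c_0\lf)$-expanding. First I would fix the combinatorics. The natural domains of $F_n$ are the cylinders making up the sets $S_k(c_0\lf)$ for $0\le k\le n-1$: for each admissible word $(\alpha_1,\dots,\alpha_{k+1})$ with $\phi_j(\alpha_1,\dots,\alpha_j)\le c_0\lf$ for $j\le k$ and $\phi_{k+1}(\alpha_1,\dots,\alpha_{k+1})>c_0\lf$, the $(k+1)$-cylinder $D=C(\alpha_1,\dots,\alpha_{k+1})$ is declared a domain of $F_n$, and there $F_n=f^{k+1}$. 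Since $f$ maps each $R_i$ diffeomorphically onto $W_i$ with $W_i\supset\bigcup_{r\in J(i)}R_r$, a non-empty admissible cylinder satisfies $f(C(\alpha_1,\dots,\alpha_{k+1}))=C(\alpha_2,\dots,\alpha_{k+1})$, hence $f^k(D)=R_{\alpha_{k+1}}$ and $F_n(D)=f(R_{\alpha_{k+1}})=W_{\alpha_{k+1}}$; this is the image domain attached to $D$.

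The expansion part is then immediate and I would record it first. If $x$ lies in the range of $F_n$, say $x\in W_{\alpha_{k+1}}=F_n(D)$ for a domain $D=C(\alpha_1,\dots,\alpha_{k+1})\subset S_k(c_0\lf)$, the corresponding inverse branch of $F_n$ sends $x$ to a point $y\in D$ with derivative $(Df^{k+1}(y))^{-1}$, so by sub-multiplicativity $\|DF_n^{-1}(x)\|\le\prod_{j=1}^{k+1}\|Df^{-1}(f^j(y))\|$. Since $\phi_{k+1}(\alpha_1,\dots,\alpha_{k+1})>c_0\lf$, the estimate noted right after the definition of $\phi_n$ gives $\prod_{j=1}^{k+1}\|Df^{-1}(f^j(y))\|\le e^{-c_0\lf(k+1)}\le e^{-c_0\lf}$. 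Hence $\|DF_n^{-1}\|\le e^{-c_0\lf}$ on the whole range of $F_n$, so $F_n$ is $(c_0\lf)$-expanding with $N=1$.

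For $(\A_1)$ I would run through the clauses on the domains $D$ above. Compactness and path-connectedness of $D$ follow from those of $R_{\alpha_{k+1}}$ together with $f^{k+1}|_D$ being a diffeomorphism onto $W_{\alpha_{k+1}}$; the inner diameter of $D$ is finite because the inverse branch $(f^{k+1}|_D)^{-1}$ is Lipschitz on $W_{\alpha_{k+1}}$ (with constant at most $S^{k+1}$, using $\sup\|Df^{-1}\|\le S$) and $W_{\alpha_{k+1}}$ has finite inner diameter. Distinct domains have disjoint interiors: two $(k+1)$-cylinders inside a fixed $S_k(c_0\lf)$ have disjoint interiors, and cylinders coming from different $S_k(c_0\lf)$ are disjoint because those sets are pairwise disjoint --- if one defining word were a prefix of another, the longer cylinder would violate one of the inequalities defining its $S_k(c_0\lf)$. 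The map $F_n|_D=f^{k+1}|_D$ is injective and differentiable, and $|\det Df^{k+1}|=\prod_{i=0}^{k}|\det Df|\circ f^i$ is a finite product of bounded functions each of which is $\varepsilon$-H\"older (a composition of the $\varepsilon$-H\"older map $|\det Df|$ with a Lipschitz map $f^i$), hence $\varepsilon$-H\"older, so $F_n$ is of class $C^{1+\varepsilon}$. Finally $\partial D\subset\bigcup_{i=1}^{k+1}f^{-(i-1)}(\partial R_{\alpha_i})$, and since inverse branches of $f$ are bi-Lipschitz they preserve box dimension, so $\partial D$ is a finite union of sets of box dimension $<d$ and hence has box dimension $<d$.

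It remains to produce the holes and verify the ``iterates eventually contain a hole'' clause. Fix a domain $D=C(\alpha_1,\dots,\alpha_{k+1})$ with image $W_{\alpha_{k+1}}$, and write (up to boundaries) $W_{\alpha_{k+1}}=H_{\alpha_{k+1}}\cup\bigcup_{r\in J(\alpha_{k+1})}R_r$. Using the rewriting $S_j(c_0\lf)=B_j(c_0\lf)\setminus\big(B_{j+1}(c_0\lf)\cup f^{(-j)}(H_f)\big)$, each $R_r$ splits (up to its boundary) into the union of the $F_n$-domains contained in $R_r$ together with the leftover set of points whose orbit stays bad through step $n$ or leaves $R_0\cup\dots\cup R_m$ within the first $n$ steps before ever turning good; thus $W_{\alpha_{k+1}}$ contains a union of $F_n$-domains, and the induced hole $\widehat H=W_{\alpha_{k+1}}\setminus\bigcup(F_n\text{-domains in }W_{\alpha_{k+1}})$ contains $H_{\alpha_{k+1}}$, hence has non-empty interior. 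Since $F_n(D)=W_{\alpha_{k+1}}\supset\widehat H$, the iterates of $D$ already contain a hole after one step. Everything else --- the $C^{1+\varepsilon}$-regularity, the box-dimension bound on the boundaries, and the expansion estimate --- is routine given the hypotheses and the remarks already in place; the only real care is needed in the bookkeeping of this last step, namely correctly identifying the $F_n$-domains together with the induced hole and confirming disjointness of cylinders coming from different $S_k(c_0\lf)$.
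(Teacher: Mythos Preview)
Your expanding argument is exactly the paper's: for $x$ in a domain $D=C(\alpha_1,\dots,\alpha_{k+1})\subset S_k(c_0\lf)$, the condition $\phi_{k+1}>c_0\lf$ gives
\[
\|DF_n^{-1}(F_n(x))\|\le\prod_{j=1}^{k+1}\|Df^{-1}(f^j(x))\|<e^{-(c_0\lf)(k+1)}\le e^{-c_0\lf},
\]
so $N=1$ works. The paper's proof stops at this estimate and does not verify $(\A_1)$ at all; the hole $H_{F_n}$ is simply \emph{defined} immediately afterward in \eqref{eq_holeF} and then used. Your careful walk through the domain structure (compactness, path-connectedness, finite inner diameter via the Lipschitz bound $S^{k+1}$), disjointness of interiors (the prefix argument), $C^{1+\varepsilon}$-regularity of $f^{k+1}$, the boundary box-dimension bound, and the identification of a hole containing $H_{\alpha_{k+1}}$ is correct and fills in what the paper leaves implicit. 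So your proof is a strict superset of the paper's argument; the extra work buys you an honest verification that $F_n$ really is a map with holes in the sense of $(\A_1)$, not just that it is expanding.
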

	
	\begin{proof}
		Let $x$ be in $\bigcup\limits_{k = 0}^{n-1} S_k(c_0\lf)$, and suppose $x \in S_{j-1}(c_0\lf)$, with $j\in \{1,\dots, n\}$.
		Thus, there are $\alpha_1,\dots ,\alpha_{j}$ such that
		$x \in C(\alpha_1,\dots ,\alpha_{j})$ with
		$$
		\phi_j(\alpha_1,\dots,\alpha_{j}) > c_0\lf.
		$$
		Hence,
		$$
		\log\prod\limits_{i = 1}^{j}\|Df^{-1}(f^i(x))\|^{-1} =
		\sum\limits_{i = 1}^{j}\log\|Df^{-1}(f^i(x))\|^{-1}  > (c_0\lf)j
		$$
		Therefore,
		$$
		\prod\limits_{i = 1}^{j}\|Df^{-1}(f^i(x))\| < e^{-(c_0\lf)j}.
		$$
		On the other hand, the fact that $x \in S_{j-1}(c_0\lf)$ implies
		that $F_n(x) = f^{j}(x)$.
		Then
		$$
		\|DF_n^{(-1)}(F_n(x))\| = \|Df^{-j}(f^j(x))\| \leq
		\prod \limits_{i = 1}^{j}\|Df^{-1}(f^i(x))\| < e^{-(c_0\lf)j}.
		$$
		Therefore $F_n$ is $(c_0\lf)$-expanding.
	\end{proof}
	
	We define the \emph{repeller} of $ F_n $  by
	\begin{equation}
		\Lambda_{F_n}  =  \bigg\{x \in M \colon F_n^j(x) \in \bigcup\limits_{k = 0}^{n-1}S_k(c_0\lf), \forall j
		\geq 0 \bigg\} \;\;\;\;\;
	\end{equation}
	and the \emph{hole} of $F_n$ by
	\begin{equation}
		\label{eq_holeF}
		H_{F_n} = M \backslash \bigcup\limits_{k = 0}^{n-1}S_k(c_0\lf).
	\end{equation}
	Notice that $\Lambda_{F_n} \subset \Lambda_f$ and, in general, $H_{F_n}
	\nsubseteq H_f$.
	
	\begin{proposition}
		\label{Teo5}
		Given $c_0 > 0 $, let $F$ be a $(c_0\mu_f)$-expanding map with holes $H_F$.
		There exists a map
		$$
		\psi : [0,1] \rightarrow [0,d],
		$$
		where $\psi(x)$ tends to $d = dim M$ when $x$ converges to zero,
		such that
		$$
		HD(\Lambda_F) \geq \psi(Leb (H_F)).
		$$
	\end{proposition}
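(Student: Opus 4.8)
The plan is to establish the lower bound via the \emph{mass distribution principle} (Frostman's lemma): to conclude $\HD(\Lambda_F)\ge s$ it suffices to produce a probability measure $\nu$ supported on $\Lambda_F$ and a constant $C_\nu$ with $\nu(B(x,r))\le C_\nu r^{s}$ for every $x\in\Lambda_F$ and all small $r$. I would then exhibit such a $\nu$ for $s=\psi(\Leb(H_F))$, where $\psi(t)\to d$ as $t\to 0$; morally this follows Dysman's argument in \cite{Dys05}, adapted so that the exponent is tracked in terms of $\Leb(H_F)$.

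\textbf{Step 1 (cylinder geometry).} Let $N$ be the integer from the definition of $(c_0\lf)$-expansion, so inverse branches of $F^{N}$ contract by at least $e^{-c_0\lf N}$; iterating over multiples of $N$ and interpolating with the uniform bound $\|DF^{-1}\|\le S$ (available from the $C^{1+\varepsilon}$-regularity on the compact domains) one gets $\|DF^{-n}\|\le \mathrm{const}\cdot e^{-c_0\lf n}$ for all $n$. The chain rule together with bounded distortion for $\log|\det DF|$ along inverse branches of $F^{n}$ (uniform in $n$, since $|\det DF|$ is $\varepsilon$-Hölder) then shows that every $n$-cylinder $C$ of $F$ has diameter comparable, up to a fixed factor, to $e^{-c_0\lf n}$, and Lebesgue measure comparable to $|\det DF^{-n}|$ at any of its points, hence to a fixed power of its diameter. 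The hypothesis that each $\partial R_i$ has box dimension strictly less than $d$ ensures that the $n$-cylinders genuinely exhaust $\Lambda_F$ and that $\nu$ will assign no mass to $\bigcup_i\partial R_i$.

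\textbf{Step 2 (construction of $\nu$ and telescoping).} I would build $\nu$ by a Lebesgue-proportional splitting: distribute unit mass among the level-one domains proportionally to their Lebesgue measure and, inductively, split the mass of an $n$-cylinder $C$ among its surviving $(n+1)$-subcylinders proportionally to their Lebesgue measures. The Lebesgue mass removed from $C$ at the $(n+1)$-st step is exactly the part of $C$ sent into $H_F$ by $F^{n}$; by bounded distortion this is at most a fraction $\kappa=\kappa(\Leb(H_F))$ of $\Leb(C)$, and $\kappa\to 0$ as $\Leb(H_F)\to 0$ because the image domains $W_i$ have Lebesgue measure bounded below by a fixed constant. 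Telescoping gives $\nu(C)\le \mathrm{const}\cdot\Leb(C)\,(1-\kappa)^{-n}$, and combining with Step 1, namely $\Leb(C)\lesssim\diam(C)^{d}$ and $\diam(C)\gtrsim e^{-c_0\lf n}$, one obtains for every $n$-cylinder the bound $\nu(C)\le \mathrm{const}\cdot\diam(C)^{\,d-\beta}$ with $\beta=\beta(\Leb(H_F))$ controlled by $\log(1-\kappa)^{-1}$ divided by the effective expansion rate. Passing from cylinders to balls via the bounded geometry of the cylinder nest yields the Frostman estimate with $s=d-\beta$, and one sets $\psi(\Leb(H_F))=d-\beta$.

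\textbf{Main obstacle.} The hard part is quantitative: forcing $\beta\to 0$ as $\Leb(H_F)\to 0$. A crude accounting of the loss to the hole — one factor $(1-\kappa)^{-1}$ per iterate against one factor $e^{-c_0\lf}$ of contraction — would leave $\beta$ bounded away from $0$; the remedy is to measure the loss to the hole per block of $N$ iterates, the scale at which the expansion genuinely acts, and to use that $\Leb(H_F)$ is small relative to that block's distortion constant, so that $\log(1-\kappa)^{-1}$ is genuinely of smaller order than $c_0\lf N$. The remaining technical points — when $d\ge 2$ the cylinders need not be round, so the comparison between $\Leb(C)$ and $\diam(C)^{d}$ carries a non-conformal distortion factor that must be absorbed into $\psi$ without spoiling $\psi(t)\to d$, and the verification that $\bigcup_i\partial R_i$ is $\nu$-null — are dealt with by the bounded-distortion estimates of Step 1 and the box-dimension hypothesis on $\partial R_i$, respectively.
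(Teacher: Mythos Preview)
The paper does not prove this proposition at all: it is quoted as \cite[Theorem~3]{Dys05} and used as a black box in the proof of Theorem~\ref{t.thmA}. There is therefore no ``paper's own proof'' against which to compare your attempt.

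Your sketch via the mass distribution principle is the standard route for such lower bounds and is, in outline, what Dysman does in \cite{Dys05}; you have correctly isolated the two real difficulties, namely balancing the per-step loss to the hole against the expansion rate (which here is only $c_0\lf$, not bounded away from zero) and the non-conformality when $d\ge 2$. One point to tighten in Step~1: you assert that every $n$-cylinder has diameter \emph{comparable} to $e^{-c_0\lf n}$, but the $(c_0\lf)$-expansion hypothesis gives only the upper bound $\diam(C)\lesssim e^{-c_0\lf n}$; the lower bound on $\diam(C)$ comes from an upper bound on $\|DF\|$, which is in general a different and larger constant. This asymmetry is precisely the non-conformal distortion you mention at the end, so it should appear already in Step~1 rather than be deferred. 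Also note that the $\psi$ you construct will implicitly depend on the expansion rate $c_0\lf$ (and on bounds for $\|DF^{\pm 1}\|$, the distortion constants, and the geometry of the $W_i$); the statement as written in the paper suppresses this dependence, but it is present in the application, where $F=F_{n_0}$ and these quantities are controlled before $\Leb(H_{F_{n_0}})$ is made small. For a line-by-line comparison you would need to consult \cite{Dys05} directly.
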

	
	\begin{proof}
		See \cite[Theorem 3]{Dys05}.
	\end{proof}
	
	\medskip
	
	In other words, given $\epsilon > 0$, there is $\delta = \delta(n)
	> 0$ such that if $\Leb(H_{F}) < \delta$ then $\HD(\Lambda_{F}) \geq d -
	\epsilon$.
	
	\medskip
	
	To prove Theorem~\ref{t.thmA}, we consider $F_n$ as above.
	Notice that $B_n(c_0\lf)$ is not contained in the domain of $F_n$, so the hole of $F_n$ can be written as
	\begin{align*}
		H_{F_n} = & B_n(c_0\lf) \cup H_f \cup (f^{-1}(H_f)\cap B_1(c_0\lf)) \cup \dots \cup \\
		& \cup \dots \cup (f^{-(n - 1)}(H_f) \cap B_{n -
			1}(c_0\lf)),
	\end{align*}
	where $H_f$ is the hole of $f$ (see \eqref{eq.hf}). 
	
	Then
	\begin{equation}
		\label{eq.vol1}
		\Leb(H_{F_n}) \le \Leb(B_n(c_0\lf)) + \Leb(H_f) + \sum\limits_{j = 1}^{n - 1}\Leb(f^{-j}(H_f)\cap
		B_j(c_0\lf)).
	\end{equation}

	By hypothesis $\sup \|Df^{(-1)}\| \le S$.
	Hence
	$$
	\Leb (f^{-j} (H_f)) \le S^{dj} \Leb(H_f).
	$$
	Moreover, the number of connected components of $B_j(c_0 \mu_f) \le (m+1)^j$.
	Therefore,
	\begin{equation}
		\label{eq.vol2}
		\Leb(f^{-j}(H_f) \cap B_j(c_0\lf)) \le S^{dj} \Leb(H_f) (m+1)^j.
	\end{equation}

	By condition $(\A_2)$, $\Leb(B_n(c_0\lf)) \leq \zeta(n, \mu_f)$, for $n \ge n_0$ and $0 \le \lf < \mu_0$.
	Therefore, from Equations~\eqref{eq.vol1} and \eqref{eq.vol2}, we have
	\begin{equation}\label{eq24}
		\Leb(H_{F_{n}}) \leq \zeta(n, \mu_f) + \lf + \lf \sum\limits_{j = 1}^{n - 1}S^{dj}(m + 1)^j.
	\end{equation}
By reducing $\mu_0 > 0$ if necessary, we have for every $0 \le \lf \le \mu_0$, 
	$$
	\zeta(n_0, \mu_f) + \lf\sum\limits_{j = 0}^{n_0 - 1}S^{dj}(m + 1)^j  < \delta,
	$$
	By \eqref{eq24}, we have $\Leb(H_{F_{n_0}}) < \delta$.
	From Proposition~\ref{p.F-expanding}, $F_{n_0}$ is $(c_0\lf)$-expanding.
	Therefore, it follows from Proposition~\ref{Teo5} that $\HD(\Lambda_{F_{n_0}}) \geq d -
	\epsilon$.
	As $\Lambda_{F_{n_0}} \subset \Lambda_f$, it follows that $\HD(\Lambda_f) \geq d - \epsilon$.
	The proof of Theorem~\ref{t.thmA} is complete.
	
	\section{Central Lyapunov Exponents}
	\label{s.lyap}
	
	We are left to prove Proposition~\ref{cresc_exp}. The proof, 
	given in Section~\ref{proof}, combines estimates of the derivative
	and the Jacobian of $f_\mu$ near the bifurcation point $p_\mu$
	(Section~\ref{local}) with a combinatorial analysis of the visits
	of orbits to a neighborhood of $p_\mu$ (Section~\ref{visits}).
	
	The intuition is as follows. For $\mu > 0$, inside
	the Markov rectangle $S_0\supset V$, the derivative of $g_\mu$ may
	contract the central bundle in certain directions, in contrast to what
	happens outside $S_0$, where the derivative  uniformly expands the central bundle. In principle, an orbit may spend a significant amount of time in $S_0$ accumulating contraction along the central
	bundle, and the time spent outside $S_0$ may not be sufficient to
	compensate for these contractions. However, for a set of full Lebesgue measure and for every sufficiently small $\mu>0$, expansion does prevail, as
	required by ($\A_2$).
	
	Our estimates involve a large number of constants. Rather than keeping track of all relations between these constants, we choose to use explicit values whenever possible. However, these values are mainly technical and are not intended to be optimal.

	\subsection{Iterates in the non-hyperbolic region}
	\label{local}
	
	Lemma~\ref{l.metric} states that $f_\mu$ is not excessively contracting along
	the central direction outside a small neighborhood of $p_\mu$.
	Lemma~\ref{l.jac_local} states that $f_\mu$ is volume expanding
	outside $H_\mu$.
	For simplicity, we continue to denote $W_\mu \cap V$ by $V$.
	Recall that $V_1\subset V$ is the neighborhood of $p_\mu$
	introduced in property ($H_5$) and Proposition~\ref{p.V1}.
	
	Let $V_2(\mu)$ be a neighborhood of $p_\mu$ inside $H_\mu$, defined by
	
	\begin{equation}
		\left\{(\rho,\theta)\;:\; \rho < \left(\dis\frac{127}{128}\right)^2\rho_0(\mu)\ \right\},
	\end{equation}
	
	\noindent where $\rho_0(\mu)=(\mu/b_1(\mu))^{1/2}$ is the
	radius of the $F_\mu$-invariant circle created by the Hopf bifurcation.
	
	\begin{figure}[phtb]
		\begin{center}
			\includegraphics[height=7.0cm]{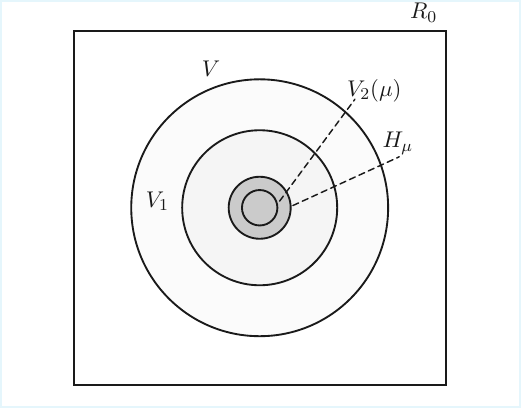}
			\caption{Neighborhood of $p_\mu$}
		\end{center}
	\end{figure}
	
	\begin{lemma}
		\label{l.metric}
		Let $(g_\mu)_\mu$ satisfy properties ($H_1$)--($H_6$).
		Then there exists a constant $\mu_1>0$ such that, for every
		$0 < \mu \le \mu_1$, we have $H_\mu\subset V_1$, and there exists
		a neighborhood $V_2(\mu)\subset H_\mu$ of $p_\mu$ such that:
		\begin{itemize}
			\item[(a)] $\displaystyle \log \big\| (Df_\mu(x))^{-1} \big\|^{-1}
			\ge -\frac{33}{32}\mu + \frac{31}{32}b_1(\mu)\rho^2
			\text{ for every }x  \in  V_1$\,.
			\item[(b)]
			$\displaystyle \log \big\|(Df_\mu(x))^{-1}\big\|^{-1}
			\ge - \frac{3}{32}\mu$ for every $x$ outside $V_2(\mu)$.
		\end{itemize}
	\end{lemma}
	
	\begin{proof}
		
	See \cite[Lemma~4.1]{HV05}.
\end{proof}
	
	\bigskip
	
	We now estimate the Jacobian $\Jac f_\mu(x) = |\det Df_\mu (x)|$ of $f_\mu$.
	
	\begin{lemma}\label{l.jac_local}
		For every $0 < \mu \leq \mu_1$, we have
		$\displaystyle \log \Jac f_\mu(x) \ge 2\log \sigma_1>0$ for every $x$ outside $V_1$. Moreover:
		\begin{itemize}
			\item[(a)] $\displaystyle \log \Jac f_\mu(x) \geq -\frac{65}{32}\mu+\frac{127}{32}
			b_1(\mu)\rho^2$ for every $x \in V_1$
			\item[(b)] $\displaystyle \log \Jac f_\mu(x) \geq \frac{61}{32}\mu$ for every
			$x$ outside $V_2(\mu)$.
		\end{itemize}
	\end{lemma}
	
	\begin{proof}
	See \cite[Lemma~4.2]{HV05}.
	\end{proof}

	\subsection{Visits to the non-hyperbolic region}
	\label{visits}
	
	We deduce more global versions of the estimates from
	the previous section that apply to orbit segments visiting
	the non-hyperbolic region several times.
	
	Given integers $n\ge 1$, $t\ge 1$ and sequences $k_j$, $l_j$
	for $1\le j\le t$, such that
	$$
	k_1+l_1+\cdots+k_t+l_t=n,
	$$
	we denote by
	$\CC(k_1,l_1,\dots , k_t,l_t)$ the set of all 
	$n$-cylinders $C_n$ that spend, alternately, $k_i$ iterates outside
	$R_0$ and then $l_i$ iterates inside $R_0$\,. More precisely, a cylinder 
	$C_n=C(\alpha_0, \ldots, \alpha_{n-1})$ belongs to $\CC(k_1,l_1,\dots
	, k_t, l_t)$ if and only if $\alpha_j>0$ whenever $j$ lies in the
	time intervals
	$$
	[0,k_1),[k_1+l_1\,,k_1+l_1+k_2),\ldots,[k_1+\cdots+l_{t-1}\,,k_1+\cdots+l_{t-1}+k_t)
	$$
	and $\alpha_j=0$ for all other values of $j$. Here $k_1\ge 0$
	and $l_t\ge 0$, while all other $k_i$ and $l_i$ are strictly
	positive. Given $0\le l \le n$, we denote by $\CC(n,l,t)$ the union
	of all $\CC(k_1,l_1,\dots , k_t, l_t)$ such that $l_1+\cdots+l_t=l$.
	
	\smallskip
	
	Recall that $\eta\ge 1$ is an upper bound for the number of Markov rectangles that the image of any rectangle can intersect, see $(H_6)$.
	
	\begin{lemma}
		\label{l.combinatorial}
		For every $n, l, t, k_1\,, l_1\,,\ldots, k_t\,, l_t$
		\begin{enumerate}
			\item $\# \CC(k_1\,,l_1\,,\ldots,k_t\,,l_t) \le \eta^k$ where
			$k=k_1+\cdots+k_t=n-l$
			\item $\displaystyle \# \CC(n,l,t) \le \binom{l}{t-1}\binom{n-l}{t-1} \eta^k$
		\end{enumerate}
	\end{lemma}
	
	\begin{proof}
	See \cite[Lemma~4.3]{HV05}.
	\end{proof}
	
	\begin{lemma}\label{l.medida}
		For every $k_1\,,l_1\,,\ldots, k_t\,,l_t$ and
		$C_n\in\CC(k_1\,,l_1\,,\ldots, k_t\,,l_t)$ there exist points $x_i
		\in f_\mu^{k_1+l_1+\cdots+k_i}(C_n)\subset R_0'$
		so that
		$$
		\Leb_2(C_n)
		\le \sigma^{-2(n-l)} \prod_{i=1}^t \Jac f_\mu^{l_i}(x_i)^{-1},
		$$
		where $\sigma>3$ is such that $\|Df_\mu^{-1}(x)\|^{-1}\ge \sigma$ for every $x$ outside $V$.
	\end{lemma}
	
	\begin{proof}
	See \cite[Lemma~4.4]{HV05}.
\end{proof}

The next two lemmas provide estimates for the derivative and the Jacobian along the central direction along an orbit.
	
	\begin{lemma}
		\label{l.derivada}
		There is a positive integer $n_0$ such that for every $n > n_0$, every choice of  $k_1\,,l_1\,,\ldots, k_t\,,l_t$, $\mu$ sufficiently small, and every cylinder 
		$C_n\in\CC(k_1\,,l_1\,,\ldots, k_t\,,l_t)$, we have
		\[
		\sum^n_{j=1} \inf_{x\in C_n} \log
		\big\|(Df_\mu(f_\mu^j(x)))^{-1}\big\|^{-1} \ge -
		\frac{3}{32}l\mu - \frac{13}{32}t\log \mu + (n-l) \log \sigma .
		\]
	\end{lemma}
	
	\begin{proof} 
This proof is based on the one provided by \cite[Lemma~4.5]{HV05}, the difference is that here we explicitly state the existence of $n_0$.
	
Given neighborhoods $U_1\subset U_2$ of $p_\mu$, we say that an orbit segment $\OO$ \emph{crosses} $U_2\setminus U_1$ if its
	first iterate lies in $U_1$, its last iterate lies outside $U_2$, and all intermediate iterates lie in $U_2 \setminus U_1$. Let $V_\mu$ be the neighborhood
	of $p_\mu$ corresponding to $\rho = 2\rho_0(\mu)$ in local coordinates, as in Lemma~\ref{l.metric}.

	We consider the worst-case scenario, namely orbits that cross $V_1\setminus V_\mu$ every time they visit $R_0$.
	The arguments are valid in general; in fact, the estimates from Lemma~\ref{l.metric} are better in all other cases.
	For simplicity, we assume that each crossing segment spends $q_0$ iterates in $R_0 \setminus V$,	$q_1$ iterates in $V\setminus V_1$, and $q_\mu$ iterates in
	$V_1 \setminus V_\mu$. This simplification is harmless, since these numbers may vary by at most a fixed finite amount. We then take $n_0 = q_0 + q_1 + q_\mu$, which corresponds to the time needed to cross $R_0  \setminus V_\mu$.
	
	For notational simplicity, we write
	$\big\| (Df_\mu(f_\mu^j(x)))^{-1}\big\|^{-1} = \lambda_\mu (f_\mu^j(x))$.
	During each visit to $R_0$, the orbit crosses each
	region $R_0\setminus V$, $V\setminus V_1$ and $V_1\setminus V_\mu$
	exactly once. If $l_i$ denotes the number of iterates spent during a visit to $R_0$, we obtain a decomposition of the sum along these regions.
	\begin{align*}
		\sum^{l_i}_{j=1} & \inf_{x\in C_n}  \log
		\big\| (Df_\mu(f_\mu^j(x)))^{-1}\big\|^{-1}
		= \sum_{j=1}^{l_i} \inf_{x\in C_n}
		\log \lambda_\mu (f_\mu^j (x)) \\
		& \ge \sum_{j=1}^{q_0} \inf_{x\in C_n}
		\log \lambda_\mu (f_\mu^j(x)) +
		\sum_{j=q_0+1}^{q_0+q_1} \inf_{x\in C_n}
		\log \lambda_\mu (f_\mu^j(x)) + \\
		& \quad + \sum_{j=q_0+q_1+1}^{q_0+q_1+q_\mu} \inf_{x\in C_n}
		\log \lambda_\mu (f_\mu^j(x))
		+ \sum_{j=q_0+q_1+q_\mu+1}^{l_i} \inf_{x\in C_n}
		\log \lambda_\mu (f_\mu^j(x)).
	\end{align*}
	
It follow from Lemma~\ref{l.metric} that
	\begin{align*}
		\sum_{j=1}^{l_i} \inf_{x\in C_n}
		\log \lambda_\mu (f_\mu^j (x)) & \ge
		q_0\log \sigma + q_1\sigma_1 + \sum_{j=1}^{q_\mu} \bigg(-\frac{33}{32}\mu
		+ \frac{31}{32} b_1(\mu) \rho_j^2\bigg) - \\
		& \qquad \qquad - \frac{3}{32}\mu (l_i-q_\mu-q_1-q_0) \\
		& \ge \sum_{j=1}^{q_\mu} \bigg(-\frac{33}{32}\mu +
		\frac{31}{32} b_1(\mu) \rho_j^2\bigg) - \frac{3}{32}\mu l_i,
	\end{align*}
	where $\rho_j$ is the $\rho$-coordinate of $f_\mu^j (x)$, $x\in
	\partial V_\mu$, in the local system of coordinates.
	
	Let $\rho_j$ denote the $\rho$-coordinate of $f_\mu^j(2\rho_0)$, $j\ge 1$, then
	$$
	\frac{\rho_{j+1}}{\rho_j} = \frac{(1-\mu)\rho_j+b_1(\mu)\rho^3_j}{\rho_j} =
	(1-\mu)+b_1(\mu)\rho^2_j.
	$$
	
	Let $\hat{\rho} = f_\mu^{q_\mu+1}(2\rho_0)$ be  $\rho$-coordinate of the
	first iterate of $2\rho_0$ outside $V_1$.
	The fact that for $\mu$ close to zero $\rho_0$ is close to zero, implies that
	$$
	\frac{\rho_1}{\hat{\rho}}
	= \frac{(1-\mu)2\rho_0+b_1(\mu)(2\rho_0)^3}{\hat{\rho}}
	\le b_1(\mu)^{30/32}\rho_0^{15/32}= \mu^{15/32},
	$$
	for $\mu$ sufficiently small. Hence,
	$$
	1
	= \frac{\rho_1}{\hat{\rho}} \prod_{j=1}^{q_\mu} \frac{\rho_{j+1}}{\rho_j}
	\le \mu^{15/32}\prod_{j=1}^{q_\mu} \big((1-\mu)+b_1(\mu)\rho^2_j
	\big).
	$$
	Using the elementary inequality
	$$
	\log(1-a+b)\le -\frac{31}{32} a + \frac{33}{32} b
	\quad\text{for } 0 < a, b \le \frac{1}{32},
	$$
	we deduce that, for sufficiently small $\mu>0$,
	\begin{equation}
		\label{e.log_mu}
		\sum_{j=1}^{q_\mu} \bigg( -\frac{31}{32}\mu + \frac{33}{32}
		b_1(\mu)\rho_j^2 \bigg) \geq -\frac{15}{32}\log \mu.
	\end{equation}
	Moreover, since $\rho \geq 2\rho_0$, we have
	$b_1(\mu)\rho^2 \geq 4b_1(\mu)\rho^2_0 = 4\mu$,	and therefore
	$$
	\sum_{j=1}^{q_\mu} \bigg( -\frac{33}{32}\mu
	+ \frac{31}{32}b_1(\mu)\rho_j^2 \bigg) \geq \frac{13}{15}
	\sum_{j=1}^{q_\mu} \bigg( -\frac{31}{32}\mu + \frac{33}{32}
	b_1(\mu)\rho_j^2 \bigg) \geq - \frac{13}{32} \log \mu.
	$$
	Hence, for $n > n_0$,
	\begin{align*}
		\sum^n_{j=1} \inf_{x\in C_n} \log \big\| &
		(Df_\mu(f_\mu^j(x)))^{-1}
		\big\|^{-1} =  \\
		&  = \sum^t_{i=1} \bigg[ \sum_{j=1}^{l_i} \inf_{x\in C_n} \log
		\lambda_\mu (f_\mu^j (x)) + \sum_{j=1}^{k_i}
		\inf_{x\in C_n} \log
		\lambda_\mu (f_\mu^j(x))\bigg] \\
		& \ge \sum^t_{i=1} \bigg(
		\sum_{j=1}^{q_\mu} \bigg(-\frac{33}{32} \mu + \frac{31}{32}
		b_1(\mu)\rho^2_j\bigg)-\frac{3}{32}\mu l_i+k_i \log \sigma \bigg) \\
		& \ge - \frac{13}{32}t\log \mu - \frac{3}{32}\mu l + (n-l)\log \sigma.
	\end{align*}
	We have assumed $\lambda_\mu(x) \geq \sigma$ outside $V$.
	This completes the proof.
	\end{proof}

\subsection{Proof of Proposition~\ref{cresc_exp}}
\label{proof}

We begin with an outline of the argument. Let $\mathcal{Q}_{\mu,n}(c_0\mu_{f_\mu})$ denote the collection of $n$-cylinders $C(\alpha_1,\dots,\alpha_n)$ such that
\[
\phi_n(\alpha_1,\dots,\alpha_n)\le c_0\mu_{f_\mu}.
\]
Note that $B_n(c_0\mu_{f_\mu}) \subset \mathcal{Q}_{\mu,n}(c_0\mu_{f_\mu})$. Our goal is to estimate
\[
\sum_{C \in \mathcal{Q}_{\mu,n}(c_0\mu_{f_\mu})} \Leb_2(C).
\]

The key difficulty lies in balancing two competing effects: the exponential growth in the number of $n$-cylinders versus the exponential decay of their volumes. We must show that the latter dominates.

To this end, we decompose $\mathcal{Q}_{\mu,n}$ into subfamilies $\mathcal{Q}_{\mu,n,l,t}$ consisting of cylinders whose trajectories visit $R_0$ exactly $t$ times, spending a total of $l$ iterates there. Using the combinatorial bounds from Lemma~\ref{l.combinatorial}, together with the measure estimates from Lemma~\ref{l.medida}, we obtain upper bounds for the total volume of each subfamily (see \eqref{eq_sum_cyl} and \eqref{eq_log_sum_cyl} below).

On the other hand, the defining condition
\[
\phi_n(\alpha_1,\dots,\alpha_n)\le c_0\mu_{f_\mu}
\]
implies that cylinders in $\mathcal{Q}_{\mu,n}$ exhibit weak expansion. Combining this with Lemma~\ref{l.derivada} and the binomial estimates below, we derive constraints on the parameters $l$ and $t$ (Lemma~\ref{l.ruim}). These constraints ensure that the total volume of each $\mathcal{Q}_{\mu,n,l,t}$ is exponentially small. Summing over all admissible pairs $(l,t)$ then completes the proof. Now we fill in the details of this outline.

\begin{lemma}
	\label{l.binomial}
	Given  $\tau > 0$, there exists $\kappa_0 > 0$ such
	that, for every $l \geq 1 $ and $0 < \kappa \leq \kappa_0$,
	$$
	\log \binom{l}{t} \le l(1+\tau)\kappa \log \frac{1}{\kappa}, \quad
	\text{whenever} \quad \; 0 \leq t \leq \kappa l.
	$$
\end{lemma}

\begin{proof}

This Lemma corresponds to \cite[Lemma~4.6]{HV05} with $l_0 = 1$. 
We need to verify that $l_0 =1$ is a valid choice, ensuring that the proof in \cite{HV05} remains applicable.
By Stirling's formula (see, e.g. \cite{Ro55}),
\begin{align*}
	\binom{l}{t} & = \frac{l!}{t!(l-t)!} \le \frac{\sqrt{2\pi l}\; l^le^{-l}(1+1/(4l))} 
	{(\sqrt{2\pi t}\;t^t e^{-t})(\sqrt{2\pi (l-t)}\; (l-t)^{(l-t)} e^{-(l-t)})} .
\end{align*}
Simplifying, this gives
\begin{align*}
	\binom{l}{t}  \leq \frac{1}{\sqrt{t\pi}}  \left(1+\frac{1}{4l}\right) \frac{l^l}{t^t (l-t)^{l-t}} .
\end{align*}
Since for all $l \ge 1$ and every $0 < t < l/2$
$$
\frac{1}{\sqrt{t\pi}} \left(1+\frac{1}{4l}\right) \leq 1,
$$
we have
$$
\binom{l}{t} \leq \frac{l^l}{t^t (l-t)^{l-t}} =
\bigg( \frac{l}{t} \bigg)^t \left( \frac{l}{l-t} \right)^{l-t}
= \left[ \left( \frac{l}{t} \right)^{\frac{t}{l}} \left( \frac{l}{l-t}
\right)^{\frac{l-t}{l}}\right]^l ,
$$
for every $l \geq 1$ and every $0 < t < l/2$.
From this point on the proof follows exactly as in \cite{HV05}. 
For completeness we provide the details.

Since $t \le \kappa l$ is equivalent to
$$
\frac{l}{t} \geq \frac{1}{\kappa} \quad \text{ and } \quad \frac{l}{l-t} \leq
\frac{1}{1-\kappa},
$$
and the function $x^{1/x}$ is an increasing function for $x$ close to $1$ and
it is decreasing for  $x$ large, there exists $\kappa_1 > 0$ such that
$$ \binom{l}{t} \leq \left[ \left( \frac{1}{\kappa}\right)^\kappa
\left( \frac{1}{1 - \kappa}\right)^{1-\kappa}\right]^l \quad \text{for
	every } 0< \kappa \leq \kappa_1. $$

Let $\tau$ be a positive constant.
We define
$$
h(\kappa) = \tau \kappa \log \frac{1}{\kappa} - (1-\kappa) \log
\frac{1}{1-\kappa}.
$$
Then, $h(\kappa)$ is a smooth function for $0 < \kappa < 1$. Moreover,
$$
\lim_{\kappa \rightarrow 0^+} h(\kappa) = \lim_{\kappa \rightarrow 1^-}
h(\kappa)= 0
$$
and $h(\kappa)$ vanishes at some point of the interval $(0,1)$. Furthermore,
the derivative of $h(\kappa)$ is given by
$$
h'(\kappa) = \tau \log \frac{1}{\kappa} + \log \frac{1}{1-\kappa} -1 - \tau .
$$
Therefore, given any $\tau > 0$ there exists $0 < \kappa_0 = \kappa_0(\tau)
\le \kappa_1$ such that $h'(\kappa)> 0$, for every $0< \kappa \leq
\kappa_0$, that is, $h$ is increasing for every $0 < \kappa \leq \kappa_0$.
Hence $h(\kappa) \geq 0$ in this interval. Thus,
$$
(1-\kappa) \log \frac{1}{1-\kappa} \leq \tau \kappa \log \frac{1}{\kappa},
\quad \text{for every } \quad 0<\kappa \le \kappa_0.
$$
Then,
$$
\log \binom{l}{t} \leq l (1+\tau) \kappa \log \frac{1}{\kappa}, \quad
\text{for every } 0 < \kappa \leq \kappa_0.
$$
This proves the lemma.
\end{proof}

\begin{remark}\label{obs2}
	It follows from \eqref{eq.radii} that for each $\mu$ small enough the volume of the hole $H_\mu$, up to a positive constant that depends only on the metric on the center-unstable leaf $W^c(p_\mu)$ is proportional to $\mu$.
	For simplicity of notation, we shall assume $\mu_{f_\mu} = \mu$, which we will do throughout.
\end{remark}

Fix $c_0 = 1/256$. Decompose $\mathcal{Q}_{\mu,n}(c_0\mu) = \bigcup_{l,t} \mathcal{Q}_{\mu,n,l,t}$, as the disjoint union of all $\mathcal{Q}_{\mu,n,l,t} = \mathcal{Q}_{\mu,n}(c_0\mu)\cap \mathcal{C}(n,l,t)$.
By Lemma~\ref{l.derivada}, any $C_n \in \mathcal{C}(n,l,t)$ satisfies
\[
\sum_{j=1}^n \inf_{x\in C_n} \log \|Df_\mu^{-1}(f_\mu^j(x))\|^{-1}
\ge -\frac{3}{32}l\mu - \frac{13}{32}t\log \mu + (n-l)\log \sigma.
\]
Since the left-hand side equals $n\phi_{\mu,n}(\alpha_1,\dots,\alpha_n)$, a necessary condition for $C_n \in \mathcal{Q}_{\mu,n,l,t}$ is
\begin{equation}
\label{e.bad}
-\frac{3}{32}l\mu - \frac{13}{32}t\log \mu + (n-l)\log \sigma
\le c_0 \mu n.
\end{equation}

\begin{lemma}\label{l.ruim}
If $C_n \in \mathcal{Q}_{\mu,n,l,t}$, $n > n_0$ and $\mu >0$ sufficiently small, then
\[
\frac{n-l}{l} \le \frac{1}{8}\frac{\mu}{\log \sigma}
\qquad and \qquad
\frac{t}{l} \le \frac{1}{4}\frac{\mu}{-\log \mu}.
\]
\end{lemma}

\begin{proof}
See \cite[Lemma~4.7]{HV05}
\end{proof}

\medskip

We now estimate the total volume of $\mathcal{Q}_{\mu,n,l,t}$. From Lemmas~\ref{l.combinatorial} and \ref{l.medida},
\begin{equation}
\label{eq_sum_cyl}
\sum_{C\in \mathcal{Q}_{\mu,n,l,t}} \Leb_2(C)
\le \binom{l}{t-1} \binom{n-l}{t-1}
\left(\frac{\eta}{\sigma^2}\right)^{n-l}
\prod_{i=1}^t \Jac f_\mu^{l_i}(x_i)^{-1}.
\end{equation}

The second term in the right hand side of \eqref{eq_sum_cyl} can be estimate using
\begin{equation}
	\label{eq.elementary}
	\binom{m}{l} \le \sum_{j=0}^{m} \binom{m}{j} = 2^m, \quad 0 \le t \le m,
\end{equation}
and for the last term we use part (b) of Lemma~\ref{l.jac_local} to get
\begin{equation}
	\label{eq.contraction}
	\log \prod_{i=1}^t \Jac f_\mu^{l_i} (x_i)^{-1} \le - \frac{61}{32}\mu l.
\end{equation}
We bound the first term as follows.

\begin{lemma}
\label{l.binom}
There exists $\mu_0 > 0$ such that for every $\mu < \mu_0$, $n > n_0$, and $C_n \in \mathcal{Q}_{\mu,n,l,t}$,
\[
\log \binom{l}{t-1} \le \frac{13}{32}\mu l.
\]
\end{lemma}

\begin{proof}
	The second inequality of Lemma~\ref{l.ruim}, establishes that for $n$-cylinder in $\Q_{\mu,n,l,t}(c_0\mu_{f_\mu})$ the values of $l$ and $t$ must satisfy a relation that depends on $\mu$\;:
	$$
	t \le \frac{\mu}{-4 \log \mu} l.
	$$
	We write $\kappa = \mu/(-4\log \mu)$. Notice that
	$$
	\frac{1}{\mu}\kappa \log \frac{1}{\kappa} = \frac{ \log(-4 \log \mu)}{-4 \log \mu} - \frac{\log \mu}{-4 \log \mu}
	$$
	converges to $1/4$ when $\mu$ goes to zero.
	Now, we fix, $\tau = 1/1000$. 
	Then, there exists $0 < \mu_0 \le \mu_1$ such that for every $0 < \mu <\mu_0$ 
	$$
	\frac{1}{\mu}\kappa \log \frac{1}{\kappa}  \le \frac{13}{32(1+\tau)}.
	$$
	Applying Lemma~\ref{l.binomial}, we obtain
	$$
	\log \binom{l}{t-1} \le \log \binom{l}{t} \le l(1+\tau)\kappa \log \frac{1}{\kappa} \le \frac{13}{32} \mu l.
	$$
	This completes the proof.
\end{proof}

Combining these estimates, we obtain
\begin{equation}
\label{eq_log_sum_cyl}
\log \sum_{C\in \Q_{\mu,n,l,t}} \Leb_2(C) \le \frac{13}{32} \mu l + (n-l) \log 2 + (n-l) \log \frac{\eta}{\sigma^2} - \frac{61}{32}\mu l.
\end{equation}

Using $\eta \le 1000\sigma^2$ and Lemma~\ref{l.ruim},
\[
(n-l)\log 2 + (n-l)\log \frac{\eta}{\sigma^2}
\le (n-l)\log 2000 \le \mu l,
\]
for $\mu$ small. Hence,
\begin{equation}
\label{eq.taquase}
\log \sum_{C\in \mathcal{Q}_{\mu,n,l,t}} \Leb_2(C)
\le -\frac{1}{2}\mu l \le -\frac{1}{4}\mu n.
\end{equation}

Finally, summing over all admissible $l$ and $t$ (at most $\frac{\mu}{-4\log\mu}n^2$ terms, since $l \le n$ and $t \le \frac{\mu}{-4\log\mu}$), we obtain
\[
\sum_{C \in \mathcal{Q}_{\mu,n}(c_0\mu)} \Leb_2(C)
\le \frac{\mu}{-4 \log \mu} n^2 e^{-(1/4)\mu n}.
\]
Since $B_n(c_0\mu) \subset \mathcal{Q}_{\mu,n}(c_0\mu)$, this yields
\[
\Leb_2(B_n(c_0\mu)) \le \frac{\mu}{-4 \log \mu} n^2 e^{-(1/4)\mu n}.
\]

This completes the proof of Proposition~\ref{cresc_exp}.


\end{document}